\newcommand{\be}{\begin{equation}}
\newcommand{\ee}{\end{equation}}
\newcommand{\bea}{\begin{eqnarray}}
\newcommand{\eea}{\end{eqnarray}}
\newcommand{\beas}{\begin{eqnarray*}}
\newcommand{\eeas}{\end{eqnarray*}}
\newcommand{\crl}[1]{\ensuremath{ \left\{ #1 \right\} }}
\newcommand{\edg}[1]{\ensuremath{\! \left[ #1 \right] }}
\newcommand{\brak}[1]{\ensuremath{\left( #1 \right)}}
\newtheorem{theorem}{Theorem}
\newtheorem{proposition}[theorem]{Proposition}
\newtheorem{corollary}[theorem]{Corollary}
\newtheorem{remark}[theorem]{Remark}
\newenvironment{Remark}{\begin{remark}\rm}{\end{remark}}
\title{Deep optimal stopping\footnote{We thank Philippe Ehlers, Ariel Neufeld and Martin Stefanik for 
fruitful discussions and helpful comments.}}
\author{Sebastian Becker\footnote{Zenai AG, 8045 Zurich, Switzerland; sebastian.becker@zenai.ch}, \;
Patrick Cheridito\footnote{RiskLab, Department of Mathematics, ETH Zurich, 8092 Zurich, Switzerland; 
patrick.cheridito@math.ethz.ch} \; \& \; Arnulf Jentzen\footnote{SAM, Department of Mathematics, ETH Zurich, 
8092 Zurich, Switzerland; arnulf.jentzen@sam.math.ethz.ch}}
\date{}
\begin{document}

\maketitle

\begin{abstract}
In this paper we develop a deep learning method for optimal stopping problems which
directly learns the optimal stopping rule from Monte Carlo samples. As such, it is broadly applicable 
in situations where the underlying randomness can efficiently be simulated. We test the 
approach on three problems: the pricing of a Bermudan max-call option, the pricing of a callable 
multi barrier reverse convertible and the problem of optimally stopping a fractional Brownian motion. 
In all three cases it produces very accurate results in high-dimensional situations with short 
computing times.\\[2mm]
{\bf Keywords:} optimal stopping, deep learning, Bermudan option, 
callable multi barrier reverse convertible, fractional Brownian motion.
\end{abstract} 

\section{Introduction}
\label{sec:intro}

We consider optimal stopping problems of the form $ \sup_{ \tau } \mathbb{E} \, g( \tau, X_{ \tau })$,
where $X= (X_n)_{n=0}^N$ is an $\mathbb{R}^d$-valued discrete-time Markov process 
and the supremum is over all stopping times $\tau$ based on observations of $X$. 
Formally, this just covers situations where the stopping decision can only be made at finitely many times.
But practically all relevant continuous-time stopping problems can be approximated with time-discretized versions.
The Markov assumption means no loss of generality. We make it because it simplifies the presentation and many 
important problems already are in Markovian form. But every optimal stopping problem can 
be made Markov by including all relevant information from the past in the current state of $X$
(albeit at the cost of increasing the dimension of the problem).

In theory, optimal stopping problems with finitely many stopping opportunities can be solved exactly.
The optimal value is given by the smallest supermartingale that dominates the reward process -- the 
so-called Snell envelope -- and the smallest (largest) optimal stopping time is the first time the
immediate reward dominates (exceeds) the continuation value; see, e.g., \cite{PS06, LL08}. 
However, traditional numerical methods suffer from the curse of dimensionality. For instance, the 
complexity of standard tree- or lattice-based methods increases exponentially in the dimension. 
For typical problems they yield good results for up to three dimensions. 
To treat higher-dimensional problems, various Monte Carlo based methods have been developed over the last 
years. A common approach consists in estimating continuation values to either derive stopping rules or 
recursively approximate the Snell envelope; see e.g.,
\cite{Till93, BM95, Ca96, LS01, TVR01, BKT03, BG04, BPP05, KS06, EKT07, BS08, JO15, BSST} or
\cite{HK04, KKT10}, which use neural networks with one hidden layer to do this.
A different strand of the literature has focused on approximating optimal exercise boundaries; 
see, e.g., \cite{A00, G03, Be11AA}. Based on an idea of \cite{DK94}, a dual approach was
developed by \cite{R02, HK04}; see \cite{J07, CG07} for a multiplicative version and 
\cite{AB04, BC08, BBS09, R10, DFM12, Be13, BSD13, Le16} for extensions and primal-dual methods.
In \cite{SS18} optimal stopping problems in continuous time are treated by approximating the 
solutions of the corresponding free boundary PDEs with deep neural networks. 

In this paper we use deep learning to approximate an optimal stopping time. Our approach is related to policy
optimization methods used in reinforcement learning \cite{SB}, deep reinforcement learning \cite{SLM, MKS, SHM, LHP}
and the deep learning method for stochastic control problems proposed by \cite{HE}.
However, optimal stopping differs from the typical control problems studied in this literature.
The challenge of our approach lies in the implementation of a deep learning method that can efficiently 
learn optimal stopping times. We do this by decomposing an optimal stopping time into a sequence of 0-1 
stopping decisions and approximating them recursively with a sequence of multilayer feedforward neural 
networks. We show that our neural network policies can approximate optimal stopping times to any degree of 
desired accuracy. A candidate optimal stopping time $\hat{\tau}$ can be obtained by 
running a stochastic gradient ascent. The corresponding expectation 
$\mathbb{E} \, g(\hat{\tau},X_{\hat{\tau}})$ provides a lower bound 
for the optimal value $\sup_{\tau} \mathbb{E} \, g(\tau,X_{\tau})$. 
Using a version of the dual method of \cite{R02,HK04}, we also derive an upper bound.
In all our examples, both bounds can be computed with short run times and lie close together.

The rest of the paper is organized as follows: In Section \ref{sec:dl} we introduce the 
setup and explain our method of approximating optimal stopping times with neural networks. 
In Section \ref{sec:bci} we construct lower bounds, upper bounds, point estimates and confidence intervals
for the optimal value. In Section \ref{sec:ex} we test the approach on three examples: the pricing of a Bermudan 
max-call option on different underlying assets, the pricing of a callable multi barrier reverse convertible 
and the problem of optimally stopping a fractional Brownian motion. In the first two examples, we use a
multi-dimensional Black--Scholes model to describe the dynamics of the underlying assets. Then the pricing of 
a Bermudan max-call option amounts to solving a $d$-dimensional optimal stopping problem,
where $d$ is the number of assets. We provide numerical results for $d = 2,3,5, 10, 20, 30, 50, 100, 200$ and 500.
In the case of a callable MBRC, it becomes a $d+1$-dimensional stopping problem since
one also needs to keep track of the barrier event. We present results for 
$d = 2,3,5,10,15$ and $30$. In the third example we only consider a one-dimensional fractional Brownian 
motion. But fractional Brownian motion is not Markov. In fact, all of its increments are correlated. So, 
to optimally stop it, one has to keep track of all past movements. To make it tractable, we approximate
the continuous-time problem with a time-discretized version, which if formulated as a Markovian problem,
has as many dimensions as there are time-steps. We compute a solution for 100 time-steps.

\section{Deep learning optimal stopping rules}
\label{sec:dl}

Let $X = (X_n)_{n=0}^N$ be an $\mathbb{R}^d$-valued discrete-time Markov process
on a probability space $(\Omega, {\cal F} , \mathbb{P})$, where $N$ and $d$ are 
positive integers. We denote by ${\cal F}_n$ the $\sigma$-algebra generated by 
$X_0, X_1, \dots, X_n$ and call a random variable $\tau \colon \Omega \to \crl{0,1, \dots, N}$
an $X$-stopping time if the event $\crl{\tau = n}$ belongs to $ {\cal F}_n $ for all 
$n \in \{ 0, 1, \dots, N \}$.

Our aim is to develop a deep learning method that can efficiently learn an 
optimal policy for stopping problems of the form
\be 
\label{os}
\sup_{ \tau \in {\cal T} } \mathbb{E}\, g(\tau,X_{\tau}),
\ee
where $g \colon \crl{0,1,\dots, N} \times \mathbb{R}^d \to \mathbb{R}$ is a measurable function
and ${\cal T}$ denotes the set of all $X$-stopping times. To make sure that problem \eqref{os} 
is well-defined and admits an optimal solution, we assume that $g$ satisfies the integrability condition
\be \label{ic}
\mathbb{E}\, | g(n,X_n) | < \infty \quad \mbox{for all } n \in \crl{0,1,\dots,N};
\ee
see, e.g., \cite{PS06, LL08}. To be able to derive confidence intervals for the optimal value \eqref{os},
we will have to make the slightly stronger assumption 
\be \label{ic2}
\mathbb{E} \edg{ g(n,X_n)^2} < \infty \quad \mbox{for all } n \in \crl{0,1,\dots,N}
\ee
in Subsection \ref{subsec:ci} below. This is satisfied in all our examples in Section \ref{sec:ex}.

\subsection{Expressing stopping times in terms of stopping decisions} 

Any $X$-stopping time can be decomposed into a sequence of 0-1 stopping decisions.
In principle, the decision whether to stop the process at time $n$ if it has not 
been stopped before, can be made based on the whole evolution of $X$ from time $0$ until $n$. 
But to optimally stop the Markov process $X$, it is enough to make stopping decisions according to 
$f_n(X_n)$ for measurable functions $f_n \colon \mathbb{R}^d \to \crl{0,1}$, $ n = 0, 1, \dots, N$.
Theorem~\ref{thm:rep} below extends this well-known fact and serves as the theoretical basis of our method.

Consider the auxiliary stopping problems 
\be \label{nos}
V_n = \sup_{\tau \in {\cal T}_n} \mathbb{E} \, g(\tau,X_{\tau})
\ee
for $n = 0, 1, \dots, N$, where ${\cal T}_n$ is the set of all $X$-stopping times satisfying $n \le \tau \le N$. 
Obviously, ${\cal T}_N$ consists of the unique element $\tau_N \equiv N$,
and one can write $\tau_N = N f_N(X_N)$ for the constant function $f_N \equiv 1$. 
Moreover, for given $n \in \crl{0, 1, \dots, N} $ and a sequence of measurable functions
$ f_n, f_{ n + 1 }, \dots, f_N \colon \mathbb{R}^d \to \crl{0,1}$ with $f_N \equiv 1$, 
\be \label{taun}
\tau_n =  \sum_{m = n}^{N} m f_m(X_m) \prod_{j=n}^{m - 1} \left( 1 - f_j(X_j) \right)
\ee
defines\footnote{In expressions of the form \eqref{taun}, we understand the empty product 
$\prod_{j=n}^{n-1} \left( 1 - f_j(X_j) \right)$ as $1$.} a stopping time in ${\cal T}_n$.
The following result shows that, for our method of recursively computing 
an approximate solution to the optimal stopping problem \eqref{os}, it will be sufficient to consider stopping 
times of the form \eqref{taun}.

\begin{theorem} \label{thm:rep}
For a given $n \in \crl{0,1,\dots, N-1}$, let $\tau_{n+1}$ be a stopping time in ${\cal T}_{n+1}$ of the form
\be \label{taun1}
\tau_{n+1} = \sum_{m =n+1}^N mf_m(X_m) \prod_{j=n+1}^{m-1} (1-f_j(X_j))
\ee
for measurable functions $f_{n+1}, \dots, f_N \colon \mathbb{R}^d \to \crl{0,1}$ with $f_N \equiv 1$.
Then there exists a measurable function $f_n \colon \mathbb{R}^d \to \crl{0,1}$ such that 
the stopping time $\tau_n \in {\cal T}_n$ given by \eqref{taun} satisfies
\[
\mathbb{E} \, g( \tau_n, X_{ \tau_n }) \ge V_n -  \left(V_{ n + 1 } - \mathbb{E} \, g( \tau_{n+1}, X_{ \tau_{n+1}}) \right),
\]
where $V_n$ and $V_{n+1}$ are the optimal values defined in \eqref{nos}.
\end{theorem}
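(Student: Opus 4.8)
The plan is to pick $f_n$ by a greedy rule --- stop at time $n$ precisely when the immediate reward dominates the continuation value of the already-fixed stopping time $\tau_{n+1}$ --- and then to show that this choice does not enlarge the suboptimality gap, i.e.\ that $V_n-\mathbb{E}\,g(\tau_n,X_{\tau_n})\le V_{n+1}-\mathbb{E}\,g(\tau_{n+1},X_{\tau_{n+1}})$, which is exactly the asserted inequality after rearranging. Note that all quantities appearing below are integrable, since $0\le\tau_{n+1}\le N$ and \eqref{ic} force $|g(\tau_{n+1},X_{\tau_{n+1}})|\le\sum_{m=0}^N|g(m,X_m)|\in L^1$.

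First I would extract the recursive structure of \eqref{taun}: separating the $m=n$ summand gives $\tau_n=n\,f_n(X_n)+(1-f_n(X_n))\,\tau_{n+1}$, hence, since $f_n$ is $\{0,1\}$-valued,
\[
g(\tau_n,X_{\tau_n})=f_n(X_n)\,g(n,X_n)+(1-f_n(X_n))\,g(\tau_{n+1},X_{\tau_{n+1}}).
\]
Because $\tau_{n+1}$ and $X_{\tau_{n+1}}$ are functions of $X_{n+1},\dots,X_N$ only and $X$ is Markov, $\mathbb{E}[\,g(\tau_{n+1},X_{\tau_{n+1}})\mid{\cal F}_n\,]$ is a measurable function of $X_n$; write it as $c_n(X_n)$. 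Define $f_n(x):=\mathbbm{1}_{\{g(n,x)\ge c_n(x)\}}$, which is measurable and of the required form. Conditioning the last term of the display on ${\cal F}_n$ and taking expectations then yields $\mathbb{E}\,g(\tau_n,X_{\tau_n})=\mathbb{E}\big[\max\{g(n,X_n),c_n(X_n)\}\big]$.

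Next I would invoke the Snell-envelope description of the value functions (see, e.g., \cite{PS06,LL08}): under \eqref{ic} there are measurable $v_k\colon\mathbb{R}^d\to\mathbb{R}$, $k=0,\dots,N$, with $v_N=g(N,\cdot)$, $v_k(x)=\max\{g(k,x),C_k(x)\}$ for $C_k(x):=\mathbb{E}[\,v_{k+1}(X_{k+1})\mid X_k=x\,]$, and $V_k=\mathbb{E}[v_k(X_k)]$; moreover the Snell envelope satisfies $v_{n+1}(X_{n+1})=\operatorname{ess\,sup}_{\tau\in{\cal T}_{n+1}}\mathbb{E}[\,g(\tau,X_\tau)\mid{\cal F}_{n+1}\,]$, so $\mathbb{E}[\,g(\tau_{n+1},X_{\tau_{n+1}})\mid X_{n+1}\,]\le v_{n+1}(X_{n+1})$ a.s., and conditioning once more on $X_n$ gives $c_n(X_n)\le C_n(X_n)$ a.s. Using $V_n=\mathbb{E}\big[\max\{g(n,X_n),C_n(X_n)\}\big]$, the elementary fact that $x\mapsto\max\{a,x\}-x=(a-x)^+$ is nonincreasing, and the identities $\mathbb{E}[C_n(X_n)]=\mathbb{E}[v_{n+1}(X_{n+1})]=V_{n+1}$ and $\mathbb{E}[c_n(X_n)]=\mathbb{E}\,g(\tau_{n+1},X_{\tau_{n+1}})$, I obtain
\begin{align*}
V_n-\mathbb{E}\,g(\tau_n,X_{\tau_n})
&=\mathbb{E}\big[\max\{g(n,X_n),C_n(X_n)\}-\max\{g(n,X_n),c_n(X_n)\}\big]\\
&\le\mathbb{E}\big[C_n(X_n)-c_n(X_n)\big]=V_{n+1}-\mathbb{E}\,g(\tau_{n+1},X_{\tau_{n+1}}),
\end{align*}
which is the claimed inequality after rearranging.

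The step that needs care is the use of the Markov property: it is what turns the continuation value $c_n$ of the fixed rule $\tau_{n+1}$ into a function of $X_n$ alone --- so that the greedy $f_n$ really is a map $\mathbb{R}^d\to\{0,1\}$, as the statement demands --- and it is also what allows the comparison against $V_n$ through the state-dependent value functions $v_k$. The only other non-bookkeeping ingredient is the inequality $c_n\le C_n$, i.e.\ that the continuation value of a suboptimal rule cannot exceed the optimal continuation value; this is immediate from the ess-sup characterization of the Snell envelope. Everything else is routine manipulation of conditional expectations together with the trivial monotonicity of $x\mapsto(a-x)^+$.
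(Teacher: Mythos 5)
Your proof is correct, and it reaches the inequality by a genuinely different mechanism than the paper's. Both arguments choose the same greedy decision $f_n = 1_{\{g(n,\cdot)\ge h_n(\cdot)\}}$ with $h_n(X_n)$ a version of $\mathbb{E}[g(\tau_{n+1},X_{\tau_{n+1}})\mid {\cal F}_n]$ (reduced to a function of $X_n$ via Doob--Dynkin and the Markov property, exactly as you do), and both arrive at $\mathbb{E}\,g(\tau_n,X_{\tau_n})=\mathbb{E}[\max\{g(n,X_n),h_n(X_n)\}]$. The divergence is in how this is compared to $V_n$. The paper stays self-contained: it takes an arbitrary competitor $\tau\in{\cal T}_n$, forms the exchanged stopping time $\tilde\tau=\tau_{n+1}1_{\{\tau=n\}}+\tau 1_{\{\tau\neq n\}}\in{\cal T}_{n+1}$, and uses only the definition of $V_{n+1}$ as a supremum to absorb the error $\varepsilon$ on the event $\{\tau\neq n\}$, so it never needs the dynamic programming principle. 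You instead invoke the classical Snell-envelope/DPP facts $V_k=\mathbb{E}[\max\{g(k,X_k),C_k(X_k)\}]$ and $h_n\le C_n$ a.s.\ as known results from \cite{PS06,LL08}, and then close the argument with the one-line estimate $\mathbb{E}[\max\{g,C_n\}-\max\{g,h_n\}]\le\mathbb{E}[C_n-h_n]=V_{n+1}-\mathbb{E}\,g(\tau_{n+1},X_{\tau_{n+1}})$, using monotonicity of $x\mapsto\max\{a,x\}-x$ together with $h_n\le C_n$ (which is indeed needed for the sign, and which you correctly supply). Your route is shorter and makes the error-propagation structure more transparent, at the cost of importing the Markovian DPP as a black box; the paper's route re-derives exactly what it needs from the bare definition of $V_{n+1}$, which is why it reads as a pure exchange argument. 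One cosmetic point: the step ``conditioning once more on $X_n$'' is cleaner if routed through ${\cal F}_n$ via the tower property before applying the Markov property, since $\sigma(X_n)$ is not contained in $\sigma(X_{n+1})$; this is a wording issue, not a gap.
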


\begin{proof}
Denote $\varepsilon = V_{ n + 1 } - \mathbb{E} \, g( \tau_{ n + 1 } , X_{ \tau_{ n + 1 } })$, 
and consider a stopping time $ \tau \in {\cal T}_n$. By the Doob--Dynkin lemma (see, e.g., Theorem 4.41 in \cite{AB}),
there exists a measurable function $h_n \colon \mathbb{R}^d \to \mathbb{R}$ such that 
$h_n(X_n)$ is a version of the conditional expectation $\mathbb{E} \edg{g(\tau_{n+1},X_{\tau_{n+1}}) \mid X_n}$.
Moreover, due to the special form \eqref{taun1} of $\tau_{n+1}$, 
\[
g(\tau_{n+1},X_{\tau_{n+1}}) = \sum_{m =n+1}^N g(m,X_m) 1_{\crl{\tau_{n+1} = m}}
= \sum_{m =n+1}^N g(m,X_m) 1_{\crl{f_m(X_m) \prod_{j=n+1}^{m-1} (1-f_j(X_j))=1 }}
\]
is a measurable function of $X_{n+1}, \dots, X_N$. So it follows from the Markov property of $X$ that 
$h_n(X_n)$ is also a version of the conditional expectation 
$\mathbb{E} \edg{g(\tau_{n+1},X_{\tau_{n+1}}) \mid {\cal F}_n}$. Since the events
\[
D = \crl{ g(n,X_n) \ge h_n(X_n)}\quad \mbox{and} \quad  E = \crl{\tau = n}
\]
are in ${\cal F}_n$, $\tau_n = n 1_D + \tau_{n+1} 1_{D^c }$ belongs to
${\cal T}_n $ and $\tilde{\tau} = \tau_{n+1} 1_E + \tau 1_{E^c}$ to ${\cal T}_{n+1}$.
It follows from the definitions of $ V_{n+1} $ and $ \varepsilon $ that
$\mathbb{E} \, g( \tau_{ n + 1 }, X_{ \tau_{ n + 1 } }) = V_{n+1} - \varepsilon
\ge \mathbb{E} \, g( \tilde{ \tau }, X_{ \tilde{\tau} } )- \varepsilon$. Hence,
\[
\mathbb{E} \edg{ g( \tau_{ n + 1 }, X_{ \tau_{ n + 1 } } ) 1_{E^c}}
\ge \mathbb{E} \edg{g( \tilde{\tau} , X_{ \tilde{ \tau } } ) 1_{E^c}} - \varepsilon = 
\mathbb{E} \edg{g( \tau, X_{ \tau } ) 1_{E^c}} - \varepsilon,
\]
from which one obtains
\[
\begin{split}
& \mathbb{E} \, g( \tau_n, X_{ \tau_n } ) = \mathbb{E} \edg{g(n,X_n ) I_D + g(\tau_{n+1}, X_{\tau_{n+1}}) I_{D^c}}
= \mathbb{E} \edg{g(n,X_n ) I_D + h_n(X_n) I_{D^c}}\\
& \ge \mathbb{E} \edg{g( n, X_n ) I_E + h_n(X_n) I_{E^c}} = 
\mathbb{E} \edg{g( n, X_n ) I_E + g( \tau_{ n + 1 } , X_{ \tau_{ n + 1 } } ) I_{E^c}}\\
& \ge \mathbb{E} \edg{g( n, X_n ) I_E +  g( \tau, X_{ \tau } ) I_{E^c}} - \varepsilon = 
\mathbb{E} \, g( \tau, X_{ \tau }) - \varepsilon .
\end{split}
\]
Since $\tau \in {\cal T}_n $ was arbitrary, this shows that
$\mathbb{E} \, g(\tau_n, X_{ \tau_n } ) \ge V_n - \varepsilon$. Moreover, one has
$1_D = f_n(X_n)$ for the function $f_n \colon \mathbb{R}^d \to \crl{0,1}$ given by
\[
f_n(x) = 
  \begin{cases}
    1 & \mbox{ if } g(n,x) \ge h_n(x)
  \\
    0 & \mbox{ if } g(n,x) < h_n(x)
  \end{cases}.
\]
Therefore,
$$
\tau_n = n f_n(X_n) + \tau_{n+1} (1-f_n(X_n)) = \sum_{m =n}^{N} mf_m(X_m) \prod_{j=n}^{m-1} (1-f_j(X_j)),
$$
which concludes the proof.
\end{proof}

\begin{Remark}
Since for $f_N \equiv 1$, the stopping time $\tau_N = f_N(X_N)$ is optimal in ${\cal T}_N$,
Theorem \ref{thm:rep} inductively yields measurable functions $f_n \colon \mathbb{R}^d \to \crl{0,1}$ such that 
for all $n \in \crl{0,1,\dots, N-1}$, the stopping time $\tau_n$ given by \eqref{taun} is optimal among ${\cal T}_n$.
In particular, 
\be \label{ost}
\tau = \sum_{n = 1}^{N} nf_n(X_n) \prod_{j=0}^{n-1} (1-f_j(X_j)) 
\ee
is an optimal stopping time for problem \eqref{os}.
\end{Remark}

\begin{Remark}
In many applications, the Markov process $X$ starts from a deterministic initial value $x_0 \in \mathbb{R}^d$.
Then the function $f_0$ enters the representation \eqref{ost} only through the value $f_0(x_0)\in \crl{0,1}$;
that is, at time $0$, only a constant and not a whole function has to be learned.
\end{Remark}

\subsection{Neural network approximation}

Our numerical method for problem \eqref{os} consists in iteratively approximating optimal stopping 
decisions 
$
  f_n \colon \mathbb{R}^d \to \crl{0,1} 
$, 
$ 
  n = 0, 1, \dots, N - 1,
$
by a neural network
$f^{ \theta } \colon \mathbb{R}^d \to \crl{ 0, 1 }$
with parameter 
$
  \theta \in \mathbb{R}^q 
$. 
We do this by starting with the terminal stopping decision $ f_N \equiv 1 $ 
and proceeding by backward induction. 
More precisely, let $ n \in \crl{ 0, 1, \dots, N - 1 } $, and assume parameter values 
$
  \theta_{ n + 1 }, \theta_{ n + 2 }, \dots, \theta_N \in \mathbb{R}^q 
$ 
have been found such that $f^{\theta_N} \equiv 1$ and the stopping time
\[
\tau_{n+1} = \sum_{m =n+1}^{N} m f^{\theta_m}(X_m) \prod_{j=n+1}^{m-1} (1-f^{\theta_j}(X_j))
\]
produces an expected value 
$ 
\mathbb{E} \, g(\tau_{n+1},X_{\tau_{n+1}})
$ 
close to the optimum $ V_{ n + 1 } $. Since $f^{\theta}$ takes values in $\crl{0,1}$, it does not directly 
lend itself to a gradient-based optimization method. So, as an intermediate step, we
introduce a feedforward neural network $F^{\theta} \colon \mathbb{R}^d \to (0,1)$ 
of the form 
\[
F^{\theta} = \psi \circ a^{\theta}_I \circ \varphi_{q_{I-1}} \circ a^{\theta}_{I-1} \circ \dots \circ \varphi_{q_1} \circ a^{\theta}_1,
\]
where 
\begin{itemize}
\item 
$I, q_1, q_2, \dots, q_{I-1}$ are positive integers specifying the depth of the network and
the number of nodes in the hidden layers (if there are any),
\item 
$a^{\theta}_1 \colon \mathbb{R}^d \to \mathbb{R}^{q_1}, \dots, 
a^{\theta}_{I-1} \colon \mathbb{R}^{q_{I-2}} \to \mathbb{R}^{q_{I-1}} $ and 
$a^{\theta}_I \colon \mathbb{R}^{q_{I-1}} \to \mathbb{R}$ are affine functions,
\item 
for $j \in \mathbb{N}$, $\varphi_j \colon \mathbb{R}^j \to \mathbb{R}^j$ is the 
component-wise ReLU activation function given by \linebreak $\varphi_j(x_1, \dots, x_j) = (x^+_1 , \dots, x^+_j)$
\item 
$\psi \colon \mathbb{R} \to (0,1)$ is the standard logistic function 
$\psi(x) = e^x/(1+ e^x) = 1 / ( 1 + e^{ - x } )$.
\end{itemize}
The components of the parameter $\theta \in \mathbb{R}^q$ of $F^{\theta}$ consist of the 
entries of the matrices $A_1 \in \mathbb{R}^{q_1 \times d}, \dots,$  $A_{I-1} \in \mathbb{R}^{q_{I-1} \times q_{I-2}},
A_I \in \mathbb{R}^{1 \times q_{I-1}}$ and the vectors $b_1 \in \mathbb{R}^{q_1}, \dots, b_{I-1} \in \mathbb{R}^{q_{I-1}},
b_I \in \mathbb{R}$ given by the representation of the affine functions 
\[
a^{\theta}_i(x) = A_i x + b_i, \quad i = 1, \dots, I.
\]
So the dimension of the parameter space is 
\[
q = 
\begin{cases}
d+1 & \mbox{ if } I =1 \\
1 + q_1 + \dots + q_{I-1} + d q_1 + \dots + q_{I-2} q_{I-1} + q_{I-1} & \mbox{ if } I \ge 2,
\end{cases}
\]
and for given $x \in \mathbb{R}^d$, $F^{\theta}(x)$ is continuous as well as almost everywhere smooth in $\theta$.
Our aim is to determine $\theta_n \in \mathbb{R}^q$ so that 
\[
\mathbb{E} \edg{g(n,X_n) F^{\theta_n}(X_n) + g(\tau_{n+1}, X_{\tau_{n+1}}) (1- F^{\theta_n}(X_n))}
\]
is close to the supremum 
$\sup_{\theta \in \mathbb{R}^q} \mathbb{E} \edg{g(n,X_n) F^{\theta}(X_n) + g(\tau_{n+1}, X_{\tau_{n+1}}) (1- F^{\theta}(X_n))}$.
 Once this has been achieved, we define the function 
$f^{\theta_n} \colon \mathbb{R}^d \to \crl{0,1}$ by 
\be \label{ftheta}
f^{\theta_n} = 1_{[0,\infty)} \circ a^{\theta_n}_I
\circ \varphi_{q_{I-1}} \circ a^{\theta_n}_{I-1} \circ \dots \circ \varphi_{q_1} \circ a^{\theta_n}_1,
\ee
where $1_{[0,\infty)} \colon \mathbb{R} \to \crl{0,1}$ is the indicator function of $[0,\infty)$.
The only difference between $F^{\theta_n}$ and $f^{\theta_n}$ is the final nonlinearity. While
$F^{\theta_n}$ produces a stopping probability in $(0,1)$, the output of $f^{\theta_n}$ is a hard 
stopping decision given by $0$ or $1$, depending on whether $F^{\theta_n}$ takes a value below or above $1/2$.

The following result shows that for any depth $I \ge 2$, a neural network of the form \eqref{ftheta} 
is flexible enough to make almost optimal stopping decisions provided it has sufficiently many nodes.

\begin{proposition} \label{prop:appr}
Let $n \in \crl{0,1,\dots, N-1}$ and fix a stopping time $\tau_{n+1} \in {\cal T}_{n+1}$. Then, for 
every depth $I \ge 2$ and constant $\varepsilon > 0$, there exist positive integers $q_1, \dots, q_{I-1}$ such that 
\beas
&& \sup_{\theta \in \mathbb{R}^q} \mathbb{E} \edg{g(n,X_n) f^{\theta}(X_n) 
+ g(\tau_{n+1}, X_{\tau_{n+1}}) (1- f^{\theta}(X_n))}\\
&&\ge \sup_{f \in {\cal D}} \mathbb{E} \edg{g(n,X_n) f(X_n) + g(\tau_{n+1}, X_{\tau_{n+1}}) (1- f(X_n))} - \varepsilon,
\eeas
where ${\cal D}$ is the set of all measurable functions $f \colon \mathbb{R}^d \to \crl{0,1}$.
\end{proposition}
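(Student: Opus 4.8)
\medskip
\noindent\emph{Proof idea.}\quad
The plan is to first condition on $X_n$, which collapses both suprema into the maximization of a single linear functional of the $\crl{0,1}$-valued decision function, and then to push through a universal-approximation argument for ReLU networks; the one genuinely delicate point is the interplay between that approximation and the hard threshold $1_{[0,\infty)}$ in \eqref{ftheta}. Concretely, I would let $\mu_n$ denote the law of $X_n$ on $\R^d$ and, by the Doob--Dynkin lemma, pick a measurable $h_n\colon\R^d\to\R$ with $h_n(X_n)$ a version of $\mathbb{E}\edg{g(\tau_{n+1},X_{\tau_{n+1}})\mid X_n}$. Since every $f\in{\cal D}$ and every $f^\theta$ is applied to $X_n$, conditioning on $X_n$ yields
\[
\mathbb{E}\edg{g(n,X_n)f(X_n)+g(\tau_{n+1},X_{\tau_{n+1}})(1-f(X_n))}=\mathbb{E}\edg{h_n(X_n)}+\int_{\R^d}r(x)\,f(x)\,\mu_n(dx),
\]
where $r:=g(n,\cdot)-h_n$. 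From \eqref{ic} together with $\mathbb{E}|g(\tau_{n+1},X_{\tau_{n+1}})|\le\sum_{m=n+1}^{N}\mathbb{E}|g(m,X_m)|<\infty$ one gets $r\in L^1(\mu_n)$ and $\mathbb{E}\edg{h_n(X_n)}$ finite, while $\sup_{f\in{\cal D}}\int r\,f\,d\mu_n=\int r^{+}\,d\mu_n$ (attained at $1_{\crl{r\ge0}}$). So it suffices to produce widths $q_1,\dots,q_{I-1}$ and a $\theta$ with $\int r\,f^\theta\,d\mu_n\ge\int r^{+}\,d\mu_n-\varepsilon$.

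Next I would make three reductions. (i) Since $(|r|-M)^{+}\to0$ in $L^1(\mu_n)$, fix $M$ with $\int|r-r_M|\,d\mu_n<\varepsilon/4$ for $r_M:=(-M)\vee(r\wedge M)$; passing to $r_M$ moves both $\int r^{+}d\mu_n$ and $\int r\,f^\theta d\mu_n$ by at most $\varepsilon/4$, so one may assume $|r|\le M$ and target $\int r\,f^\theta d\mu_n\ge\int r^{+}d\mu_n-\varepsilon/2$. (ii) Pick a continuous $\phi$ with $\int|r-\phi|\,d\mu_n<\varepsilon/16$, then a $c\in(0,\varepsilon/16)$ that is not an atom of the push-forward $\phi_{*}\mu_n$ (only countably many atoms exist), and set $\phi':=\phi-c$; then $\mu_n(\crl{\phi'=0})=0$ and $\int|r-\phi'|\,d\mu_n<\varepsilon/8$. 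Splitting $\int r^{+}d\mu_n-\int r\,(1_{[0,\infty)}\circ\phi')\,d\mu_n=\int_{\crl{r\ge0>\phi'}}r\,d\mu_n+\int_{\crl{r<0\le\phi'}}(-r)\,d\mu_n$ and using $|r|\le|r-\phi'|$ on each of these sets gives $0\le\int r^{+}d\mu_n-\int r\,(1_{[0,\infty)}\circ\phi')\,d\mu_n\le\int|r-\phi'|\,d\mu_n<\varepsilon/8$. (iii) Choose a ball $B_R$ with $M\,\mu_n(B_R^{c})<\varepsilon/16$; since $\int_{\crl{|\phi'|<\delta}}|r|\,d\mu_n\to\int_{\crl{\phi'=0}}|r|\,d\mu_n=0$ as $\delta\downarrow0$ by dominated convergence, choose $\delta>0$ with $\int_{\crl{|\phi'|<\delta}}|r|\,d\mu_n<\varepsilon/16$.

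Then comes the network. By the universal approximation property of single-hidden-layer ReLU networks there is $q_1\in\N$ and affine maps $a_1\colon\R^d\to\R^{q_1}$, $a_2\colon\R^{q_1}\to\R$ with $\sup_{x\in B_R}|a_2(\varphi_{q_1}(a_1(x)))-\phi'(x)|<\delta$; setting $q_2=\dots=q_{I-1}=2$, a depth-$I$ network of these widths reproduces $a_2\circ\varphi_{q_1}\circ a_1$ exactly (each extra hidden layer, of width $2$, carries the scalar output through via $t=t^{+}-(-t)^{+}$). With $G^\theta$ the corresponding pre-threshold map and $f^\theta=1_{[0,\infty)}\circ G^\theta$ as in \eqref{ftheta}: whenever $x\in B_R$ and $|\phi'(x)|\ge\delta$, $G^\theta(x)$ and $\phi'(x)$ share the same sign, so $f^\theta(x)=(1_{[0,\infty)}\circ\phi')(x)$; hence $f^\theta$ and $1_{[0,\infty)}\circ\phi'$ disagree only on $B_R^{c}\cup(B_R\cap\crl{|\phi'|<\delta})$, and since $|r|\le M$,
\[
\Big|\int r\,(1_{[0,\infty)}\circ\phi')\,d\mu_n-\int r\,f^\theta\,d\mu_n\Big|\le M\,\mu_n(B_R^{c})+\int_{\crl{|\phi'|<\delta}}|r|\,d\mu_n<\varepsilon/8.
\]
Combined with (ii) this gives $\int r\,f^\theta\,d\mu_n>\int r^{+}\,d\mu_n-\varepsilon/4>\int r^{+}\,d\mu_n-\varepsilon/2$, which is what (i) requires.

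The step I expect to be the real obstacle is (iii): a ReLU network only approximates $\phi'$ uniformly on compacta, and the hard threshold amplifies any error on the zero set of $\phi'$, so the whole scheme collapses if $r$ puts positive mass on $\crl{\phi'=0}$. The tiny shift $c$, taken outside the countable atom set of $\phi_{*}\mu_n$, is exactly the device that forces $\mu_n(\crl{\phi'=0})=0$ and hence, via dominated convergence, makes a thin slab $\crl{|\phi'|<\delta}$ around the decision boundary negligible. The remaining ingredients — $L^1$-truncation, density of $C(\R^d)$ in $L^1(\mu_n)$, tightness of the probability measure $\mu_n$, and universal approximation for ReLU networks of any depth $\ge2$ — are routine.
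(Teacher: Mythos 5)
Your argument is correct, and it reaches the conclusion by a genuinely different route than the paper. The paper starts from a near-optimal measurable decision $\tilde f=1_A$, uses tightness of the two finite measures $B\mapsto\mathbb{E}[|g(n,X_n)|1_B(X_n)]$ and $B\mapsto\mathbb{E}[|g(\tau_{n+1},X_{\tau_{n+1}})|1_B(X_n)]$ to replace $A$ by a compact $K\subseteq A$, and then regularizes $1_K-1_{K^c}$ via $k_j=\max\{1-j\rho_K,-1\}$, whose superlevel sets $\{k_j\ge 0\}=\{\rho_K\le 1/j\}$ decrease to $K$ -- that monotonicity is what tames the hard threshold there -- before invoking \cite{LLPS} to pass to a ReLU sum of the form \eqref{h}. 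You instead condition on $X_n$ first, which identifies the explicit optimizer $1_{\{r\ge 0\}}$ for the advantage function $r=g(n,\cdot)-h_n$ (the same $f_n$ that appears in the proof of Theorem \ref{thm:rep}), and then approximate $r$ rather than the optimal \emph{set}: $L^1(\mu_n)$-approximation by a continuous $\phi$, an atom-avoiding shift so that the decision boundary $\{\phi'=0\}$ is $\mu_n$-null, and a thin-slab estimate around it. Both proofs rest on the same external input (density of one-hidden-layer ReLU networks in $C(\R^d)$ uniformly on compacts, padded to depth $I$ by width-$2$ identity layers), so the difference is in the intermediate surrogate and the error control near the threshold. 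What your version buys is an explicit and fully quantified treatment of the step from a continuous surrogate to its thresholded indicator -- precisely the step the paper compresses into the passage from \eqref{kj} to \eqref{hk}, which as written also needs a choice of level set of $\mu_n$-measure zero and is arguably the least detailed point of the published proof; what the paper's version buys is that it avoids conditional expectations and the truncation/shift bookkeeping entirely, working directly with any near-optimal set. One small presentational caveat: after reduction (i) you silently rename $r_M$ to $r$; since steps (ii)--(iii) use both $r\in L^1(\mu_n)$ and $|r|\le M$, it is worth saying explicitly that all subsequent integrals are against the truncated function.
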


\begin{proof}
Fix $\varepsilon > 0$. It follows from the integrability condition \eqref{ic} 
that there exists a measurable function $\tilde{f} \colon \mathbb{R}^d \to \crl{0,1}$ such that
\be \label{fhat}
\begin{aligned}
& \mathbb{E} \edg{g(n,X_n) \tilde{f}(X_n) + g(\tau_{n+1}, X_{\tau_{n+1}}) (1- \tilde{f}(X_n))}\\
& \ge \sup_{f \in {\cal D}} \mathbb{E} \edg{g(n,X_n) f(X_n) + g(\tau_{n+1}, X_{\tau_{n+1}}) (1- f(X_n))} - \varepsilon/4.
\end{aligned}
\ee
$\tilde{f}$ can be written as $\tilde{f} = 1_A$ for the Borel set $A = \{x \in \mathbb{R}^d : \tilde{f}(x) =1\}$. 
Moreover, by \eqref{ic},
\[
B \mapsto \mathbb{E} \edg{|g(n,X_n)| 1_B(X_n)} \quad \mbox{and} \quad  
B \mapsto \mathbb{E} \edg{|g(\tau_{n+1}, X_{\tau_{n+1}})| 1_B(X_n)}
\]
define finite Borel measures on $\mathbb{R}^d$. Since every finite Borel measure on $\mathbb{R}^d$ is tight 
(see e.g., \cite{AB}), there exists a compact (possibly empty) subset $K \subseteq A$ such that
\be \label{K}
\begin{aligned}
& \mathbb{E} \edg{g(n,X_n) 1_K(X_n) + g(\tau_{n+1}, X_{\tau_{n+1}}) (1- 1_K(X_n))}\\
&\ge \mathbb{E} \edg{g(n,X_n) \tilde{f}(X_n) + g(\tau_{n+1}, X_{\tau_{n+1}}) (1- \tilde{f}(X_n))} - \varepsilon/4.
\end{aligned}
\ee
Let $\rho_K \colon \mathbb{R}^d \to [0,\infty]$ be the distance function given by
$\rho_K(x) = \inf_{y \in K} \|x-y\|_2$. Then 
\[
k_j(x) = \max \crl{1 - j\rho_K(x), -1}, \quad j \in \mathbb{N},
\]
defines a sequence of continuous functions $k_j \colon \mathbb{R}^d \to [-1,1]$ that converge pointwise to 
$1_K - 1_{K^c}$. So it follows from Lebesgue's dominated convergence theorem that there exists a $j \in \mathbb{N}$
such that
\be \label{kj}
\begin{aligned}
& \mathbb{E} \edg{g(n,X_n) \, 1_{\crl{k_j(X_n) \ge 0}}
+ g(\tau_{n+1}, X_{\tau_{n+1}}) (1- 1_{\crl{k_j (X_n) \ge 0}})}\\
&\ge \mathbb{E} \edg{g(n,X_n) 1_K(X_n) + g(\tau_{n+1}, X_{\tau_{n+1}}) (1- 1_K(X_n))} - \varepsilon/4.
\end{aligned}
\ee
By Theorem 1 of \cite{LLPS}, $k_j$ can be approximated uniformly on compacts by functions of the form
\be \label{h}
\sum_{i=1}^r (v_i^T x + c_i)^+ - \sum_{i=1}^s (w_i^T x + d_i)^+
\ee
for $r,s \in \mathbb{N}$, $v_1, \dots, v_r, w_1, \dots, w_s \in \mathbb{R}^d$ and $c_1, \dots, c_r,
d_1, \dots, d_s \in \mathbb{R}$. So there exists a function $h \colon \mathbb{R}^d \to \mathbb{R}$ 
expressible as in \eqref{h} such that
\be 
\begin{aligned} \label{hk}
&\mathbb{E} \edg{g(n,X_n) \, 1_{\crl{h(X_n) \ge 0}}
+ g(\tau_{n+1}, X_{\tau_{n+1}}) (1- 1_{\crl{h(X_n)\ge 0}})}\\
&\ge  \mathbb{E} \edg{g(n,X_n) \, 1_{\crl{k_j(X_n) \ge 0}}
+ g(\tau_{n+1}, X_{\tau_{n+1}}) (1- 1_{\crl{k_j (X_n) \ge 0}})}- \varepsilon/4.
\end{aligned}
\ee
Now note that for any integer $I \ge 2$, the composite mapping $1_{[0,\infty)} \circ h$ can 
be written as a neural net $f^{\theta}$ of the form \eqref{ftheta} with depth $I$ for suitable integers 
$q_1, \dots, q_{I-1}$ and parameter value $\theta \in \mathbb{R}^q$. Hence, one obtains 
from \eqref{fhat}, \eqref{K}, \eqref{kj} and \eqref{hk} that
\[
\begin{aligned}
&\mathbb{E} \edg{g(n,X_n) \, f^{\theta}(X_n) + g(\tau_{n+1}, X_{\tau_{n+1}}) (1- f^{\theta}(X_n))}\\
&\ge \sup_{f \in {\cal D}} \mathbb{E} \edg{g(n,X_n) f(X_n) 
+ g(\tau_{n+1}, X_{\tau_{n+1}}) (1- f (X_n))} - \varepsilon,
\end{aligned}
\]
and the proof is complete.
\end{proof}

We always choose $\theta_N \in \mathbb{R}^q$ such that\footnote{It is easy to see 
that this is possible.} $f^{\theta_N} \equiv 1$. Then our candidate optimal stopping time 
\be \label{tautheta}
\tau^{\Theta} = \sum_{n=1}^{N} n f^{\theta_n}(X_n) \prod_{j=0}^{n-1} (1-f^{\theta_j}(X_j))
\ee
is specified by the vector $\Theta = (\theta_0, \theta_1, \dots, \theta_{N-1}) \in \mathbb{R}^{Nq}$.
The following is an immediate consequence of Theorem \ref{thm:rep} and Proposition \ref{prop:appr}:

\begin{corollary}
For a given optimal stopping problem of the form \eqref{os}, a depth $I \ge 2$ and a constant $\varepsilon > 0$, there 
exist positive integers $q_1, \dots,  q_{I-1}$ and a vector $\Theta \in \mathbb{R}^{Nq}$ such that the 
corresponding stopping time \eqref{tautheta} satisfies
$
\mathbb{E} \, g(\tau^{\Theta}, X_{\tau^{\Theta}}) \ge \sup_{\tau \in {\cal T}} \mathbb{E} \, g(\tau, X_{\tau}) - \varepsilon.
$
\end{corollary}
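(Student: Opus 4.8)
The plan is to combine Theorem~\ref{thm:rep} with Proposition~\ref{prop:appr} in a backward induction on $n$, carefully tracking how the approximation errors accumulate. Fix the depth $I \ge 2$ and the target accuracy $\varepsilon > 0$, and set $\delta = \varepsilon / N$. I would build the parameters $\theta_N, \theta_{N-1}, \dots, \theta_0$ recursively so that, writing $\tau_n$ for the stopping time in \eqref{taun} assembled from $f^{\theta_n}, \dots, f^{\theta_N}$, one has the estimate $\mathbb{E}\, g(\tau_n, X_{\tau_n}) \ge V_n - (N-n)\delta$ for every $n \in \crl{0,1,\dots,N}$. Since $V_0 = \sup_{\tau \in {\cal T}} \mathbb{E}\, g(\tau, X_\tau)$ and $\tau_0 = \tau^\Theta$ in the notation of \eqref{tautheta} (using that $f^{\theta_0}(X_0) \cdot (\text{empty product}) = f^{\theta_0}(X_0)$, so the $m=0$ term contributes $0$ to $\tau_0$), the case $n = 0$ yields exactly $\mathbb{E}\, g(\tau^\Theta, X_{\tau^\Theta}) \ge V_0 - N\delta = \sup_{\tau \in {\cal T}} \mathbb{E}\, g(\tau, X_\tau) - \varepsilon$, as desired.

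For the base case, choose $\theta_N$ with $f^{\theta_N} \equiv 1$, so $\tau_N \equiv N$ is the unique element of ${\cal T}_N$ and $\mathbb{E}\, g(\tau_N, X_{\tau_N}) = V_N$, giving the bound with $N - n = 0$. For the inductive step, suppose $\theta_{n+1}, \dots, \theta_N$ have been constructed so that $\tau_{n+1}$ has the form \eqref{taun1} with $f_m = f^{\theta_m}$ and satisfies $\mathbb{E}\, g(\tau_{n+1}, X_{\tau_{n+1}}) \ge V_{n+1} - (N - n - 1)\delta$. First I would apply Proposition~\ref{prop:appr} to this fixed $\tau_{n+1}$: it supplies hidden-layer widths $q_1, \dots, q_{I-1}$ and hence a parameter $\theta_n$ with
\[
\mathbb{E}\edg{g(n,X_n) f^{\theta_n}(X_n) + g(\tau_{n+1},X_{\tau_{n+1}})(1 - f^{\theta_n}(X_n))} \ge \sup_{f \in {\cal D}} \mathbb{E}\edg{g(n,X_n) f(X_n) + g(\tau_{n+1},X_{\tau_{n+1}})(1 - f(X_n))} - \delta.
\]
Then I would invoke Theorem~\ref{thm:rep}, whose conclusion is exactly that there is a measurable $f_n$ — and inspecting its proof, the optimal such $f_n$ is $1_{\crl{g(n,\cdot) \ge h_n(\cdot)}}$, which is the argmax over ${\cal D}$ of the right-hand side above — achieving $\mathbb{E}\, g(\tau_n, X_{\tau_n}) \ge V_n - (V_{n+1} - \mathbb{E}\, g(\tau_{n+1}, X_{\tau_{n+1}}))$. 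Chaining: the neural-net choice $f^{\theta_n}$ loses at most $\delta$ against the best $f \in {\cal D}$, the best $f \in {\cal D}$ realizes the Theorem~\ref{thm:rep} bound, and the inductive hypothesis bounds $V_{n+1} - \mathbb{E}\, g(\tau_{n+1}, X_{\tau_{n+1}})$ by $(N-n-1)\delta$; adding these gives $\mathbb{E}\, g(\tau_n, X_{\tau_n}) \ge V_n - (N-n)\delta$, closing the induction.

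One bookkeeping point deserves care: Proposition~\ref{prop:appr} returns widths $q_1, \dots, q_{I-1}$ that depend on $n$ (and on $\tau_{n+1}$), so a priori the networks at different times have different architectures. This is harmless for the corollary as stated, since one may simply take the common widths to be $q_i = \max_{0 \le n \le N-1} q_i^{(n)}$ — a network with more nodes can reproduce any function a narrower one of the same depth can, by zeroing out the extra weights — so a single tuple $(q_1, \dots, q_{I-1})$ works for all $n$, and all the $\theta_n$ then live in the same $\mathbb{R}^q$, making $\Theta \in \mathbb{R}^{Nq}$ well-defined. The main obstacle is not any single step — each is a direct citation — but rather getting the error accounting and the indexing of the auxiliary stopping times exactly right: one must verify that the $\tau_n$ produced by the recursion genuinely has the product form \eqref{taun} (so that Theorem~\ref{thm:rep} applies at the next stage down), and that the telescoping of the per-step losses $\delta$ over the $N$ stages yields precisely $\varepsilon$ rather than, say, $N\varepsilon$ or a geometric blow-up. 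Since the per-step bound in Theorem~\ref{thm:rep} is additive in the previous stage's gap, the errors accumulate linearly, which is why the choice $\delta = \varepsilon/N$ suffices.
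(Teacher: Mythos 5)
Your proof is correct and is precisely the backward induction that the paper has in mind when it calls the corollary ``an immediate consequence'' of Theorem~\ref{thm:rep} and Proposition~\ref{prop:appr}: a per-step loss of $\varepsilon/N$, the additive error propagation from Theorem~\ref{thm:rep}, and the padding argument to unify the hidden-layer widths are all handled properly. The only microscopic looseness is that Proposition~\ref{prop:appr} bounds a supremum over $\theta$, so one should take a near-maximizer (e.g.\ split $\delta$ into two halves), but this is routine and does not affect the argument.
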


\subsection{Parameter optimization}
\label{ss:parameter}

We train neural networks of the form \eqref{ftheta} with fixed depth $I \ge 2$ and 
given numbers $q_1, \dots, q_{I-1}$ of nodes in the hidden 
layers\footnote{For a given application, one can try out different choices of $I$ and 
$q_1, \dots, q_{I-1}$ to find a suitable trade-off between accuracy and efficiency. Alternatively, 
the determination of $I$ and $q_1, \dots, q_{I-1}$ could be built into the training algorithm.}.
To numerically find parameters $\theta_n \in \mathbb{R}^q$ yielding good stopping decisions $f^{\theta_n}$
for all times $n \in \crl{0,1, \dots, N-1}$, we approximate expected values with averages of 
Monte Carlo samples calculated from simulated paths of the process $(X_n)_{n=0}^N$. 

Let $(x^k_n)_{n=0}^N$, $k =1,2,\dots$ be independent realizations of such paths.
We choose $\theta_N \in \mathbb{R}^q$ such that $f^{\theta_N} \equiv 1$ and determine
determine $\theta_n \in \mathbb{R}^q$ for $n \le N-1$ recursively. So, suppose that for a given 
$n \in \crl{0,1, \dots, N-1}$, parameters $\theta_{n+1}, \dots, \theta_{N} \in \mathbb{R}^q$, have been found so that 
the stopping decisions $f^{\theta_{n+1}}, \dots, f^{\theta_N}$ generate a stopping time
\[
\tau_{n+1} = \sum_{m =n+1}^{N} mf^{\theta_m}(X_m) \prod_{j=n+1}^{m-1} (1-f^{\theta_j}(X_j))
\]
with corresponding expectation 
$
\mathbb{E}\, g( \tau_{ n + 1 } , X_{ \tau_{ n + 1 } } )
$ 
close to the optimal value $V_{n+1}$. If $n = N-1$, one has
$\tau_{n+1} \equiv N$, and if $n \le N-2$, $\tau_{n+1}$ can be written as
\[
\tau_{n+1} = l_{n+1}(X_{n+1}, \dots, X_{N-1}) 
\]
for a measurable function $l_{n+1} \colon \mathbb{R}^{d(N-n-1)} \to \crl{n+1, n+2, \dots, N}$. Accordingly, denote
\[
l^k_{n+1} = 
\begin{cases}
N & \mbox{if } n = N - 1 
\\
l_{n+1}(x^k_{n+1}, \dots, x^k_{N-1}) 
& \mbox{if } n \le N-2
\end{cases}
.
\]
If at time $n$, one applies the soft stopping decision $F^{\theta}$ and afterward behaves according to 
$f^{\theta_{n+1}}, \dots, f^{\theta_N}$, the realized reward along the $k$-th simulated path of $X$ is
\[
r^k_n(\theta) = g(n,x^k_n)F^{\theta}(x^k_n) + g(l^k_{n+1}, x^k_{l^k_{n+1}}) (1- F^{\theta}(x^k_n)).
\]
For large $K \in \mathbb{N}$, 
\be \label{Msum}
\frac{1}{K} \sum_{k =1}^{K} r^k_n(\theta)
\ee approximates the expected value
\[
\mathbb{E} \edg{g(n,X_n) F^{\theta}(X_n) + g(\tau_{n+1}, X_{\tau_{n+1}}) (1- F^{\theta}(X_n))}.
\]
Since $r^k_n(\theta)$ is almost everywhere differentiable in $\theta$, a stochastic gradient ascent method
can be applied to find an approximate optimizer $\theta_n \in \mathbb{R}^q$ of
\eqref{Msum}. The same simulations $(x^k_n)_{n=0}^N$, $k = 1,2,\dots$ can be used to train the stopping decisions 
$f^{\theta_n}$ at all times $n \in \crl{ 0, 1, \dots, N-1}$. In the numerical examples in Section \ref{sec:ex} below, we 
employed mini-batch gradient ascent with Xavier initialization \cite{GB10}, batch normalization 
\cite{IS15} and Adam updating \cite{KiB15}.

\begin{Remark} \label{Rem:I0}
If the Markov process $X$ starts from a deterministic initial value $x_0 \in \mathbb{R}^d$, the initial 
stopping decision is given by a constant $f_0 \in \crl{0,1}$. To learn $f_0$ from simulated paths 
of $X$, it is enough to compare the initial reward $g(0,x_0)$ to a Monte Carlo estimate 
$\hat{C}$ of $\mathbb{E} \, g(\tau_1,X_{\tau_1})$, where $\tau_1 \in {\cal T}_1$ is of the form 
\[
\tau_1 = \sum_{n=1}^{N} n f^{\theta_n}(X_n) \prod_{j=1}^{n-1} (1-f^{\theta_j}(X_j))
\]
for $f^{\theta_N} \equiv 1$ and trained parameters $\theta_1, \dots, \theta_{N-1} \in \mathbb{R}^q$.
Then one sets $f_0 = 1$ (that is, stop immediately) if $g(0,x_0) \ge \hat{C}$ and 
$f_0 = 0$ (continue) otherwise. The resulting stopping time is of the form 
\[
\tau^{\Theta} = 
\begin{cases}
0 & \mbox{if } f_0 = 1
\\
\tau_1 & \mbox{if } f_0 = 0.
\end{cases}
\]
\end{Remark}

\section{Bounds, point estimates and confidence intervals}
\label{sec:bci}

In this section we derive lower and upper bounds as well as point estimates and confidence intervals
for the optimal value $V_0 = \sup_{\tau \in {\cal T}} \, \mathbb{E} \, g(\tau, X_{\tau})$.

\subsection{Lower bound}
\label{subsec:lb}

Once the stopping decisions $f^{\theta_n}$ have been trained, the stopping time 
$\tau^{\Theta}$ given by \eqref{tautheta} yields a lower bound $L = \mathbb{E} \, g(\tau^{\Theta}, X_{\tau^{\Theta}})$ 
for the optimal value $V_0 = \sup_{\tau \in {\cal T}} \, \mathbb{E} \, g(\tau, X_{\tau})$. 
To estimate it, we simulate a new set\footnote{In particular, we assume that the samples
$(y^k_n)_{n=0}^N$, $k = 1, \dots, K_L$, are drawn independently from the 
realizations $(x^k_n)_{n=0}^N$, $k = 1, \dots, K$, used in the training of the stopping decisions.}
of independent realizations $(y^k_n)_{n=0}^N$, $k = 1,2,\dots, K_L,$ 
of $(X_n)_{n=0}^N$. $\tau^{\Theta}$ is of the form $\tau^{\Theta} = l(X_0,\dots, X_{N-1})$ for 
a measurable function $l \colon \mathbb{R}^{d N} \to \crl{0, 1, \dots, N}$. Denote $l^k = l(y^k_0, \dots, y^k_{N-1})$.
The Monte Carlo approximation 
\[
\hat{L} = \frac{1}{K_L} \sum_{k=1}^{K_L} g(l^k, y^k_{l^k})
\]
gives an unbiased estimate of the lower bound $L$, and by the law of large numbers, $\hat{L}$ converges to 
$L$ for $K_L \to \infty$. 

\subsection{Upper bound}

The Snell envelope of the reward process $(g(n,X_n))_{n=0}^N$ is the smallest\footnote{in the $\mathbb{P}$-almost sure order} 
supermartingale with respect to $({\cal F}_n)_{n=0}^N$ that dominates $(g(n,X_n))_{n=0}^N$. 
It is given\footnote{\label{fn}up to $\mathbb{P}$-almost sure equality} by 
\[
H_n = {\rm ess\,sup}_{\tau \in {\cal T}_n} \mathbb{E}[g(\tau) \mid {\cal F}_n], \quad n = 0, 1, \dots, N;
\] 
see, e.g., \cite{PS06, LL08}. Its Doob--Meyer decomposition is 
\[
H_n = H_0 + M^H_n - A^H_n,
\]
where $M^H$ is the $({\cal F}_n)$-martingale given\footref{fn} by
\[
M^H_0 = 0 \quad \mbox{and} \quad M^H_n - M^H_{n-1} = H_n - \mathbb{E}[H_n \mid {\cal F}_{n-1}], 
\quad n = 1, \dots, N,
\]
and $A^H$ is the nondecreasing $({\cal F}_n)$-predictable process given\footref{fn} by 
\[ A^H_0 = 0 \quad \mbox{and} \quad 
A^H_n - A^H_{n-1} = H_{n-1} - \mathbb{E}[H_n \mid {\cal F}_{n-1}], \quad n =1, \dots, N.
\]

Our estimate of an upper bound for the optimal value $V_0$ is based on the following 
variant\footnote{See also the discussion on noisy estimates in \cite{AB04}.
}
of the dual formulation of optimal stopping problems introduced by \cite{R02} and \cite{HK04}.

\begin{proposition}
Let $(\varepsilon_n)_{n=0}^N$ be a sequence of integrable random variables on 
$(\Omega, {\cal F}, \mathbb{P})$. Then 
\be \label{V0est1}
V_0 \ge \mathbb{E} \edg{ \max_{0 \le n \le N} \brak{g(n,X_n) - M^H_n - \varepsilon_n}}
+ \mathbb{E} \edg{ \min_{0 \le n \le N} \brak{A^H_n + \varepsilon_n}}.
\ee
Moreover, if $\mathbb{E} \edg{\varepsilon_n \mid {\cal F}_n} = 0$ for all $n \in \crl{0,1, \dots, N}$, one has
\be \label{V0est2}
V_0 \le \mathbb{E} \edg{ \max_{0 \le n \le N} \brak{g(n,X_n) - M_n - \varepsilon_n}}
\ee
for every $({\cal F}_n)$-martingale $(M_n)_{n=0}^N$ starting from $0$.
\end{proposition}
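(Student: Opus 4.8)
The plan is to derive both inequalities from the classical dual representation of Rogers and Haugh--Kogan, adapted so as to absorb the extra noise terms $\varepsilon_n$. The ingredient common to both directions is optional stopping: for any $({\cal F}_n)$-martingale $(M_n)_{n=0}^N$ with $M_0 = 0$ and any stopping time $\tau \in {\cal T}$ one has $\mathbb{E}[M_\tau] = 0$, and, writing $\varepsilon_\tau = \sum_{n=0}^N \varepsilon_n 1_{\{\tau = n\}}$, the assumption $\mathbb{E}[\varepsilon_n \mid {\cal F}_n] = 0$ combined with $\{\tau = n\} \in {\cal F}_n$ gives $\mathbb{E}[\varepsilon_n 1_{\{\tau = n\}}] = \mathbb{E}[1_{\{\tau = n\}}\mathbb{E}[\varepsilon_n \mid {\cal F}_n]] = 0$ for every $n$, hence $\mathbb{E}[\varepsilon_\tau] = 0$. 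Integrability is never an issue here: $g(\tau,X_\tau)$ is integrable by \eqref{ic}, while $|\varepsilon_\tau| \le \sum_n |\varepsilon_n|$ and $|M_\tau| \le \sum_n |M_n|$ are integrable as finite sums of integrable variables, and $|\max_{0 \le n \le N}(g(n,X_n) - M_n - \varepsilon_n)|$ is dominated by $\sum_n(|g(n,X_n)| + |M_n| + |\varepsilon_n|)$.

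For the upper bound \eqref{V0est2} I would fix such an $M$ and an arbitrary $\tau \in {\cal T}$ and write $\mathbb{E}[g(\tau,X_\tau)] = \mathbb{E}[g(\tau,X_\tau) - M_\tau - \varepsilon_\tau] \le \mathbb{E}[\max_{0 \le n \le N}(g(n,X_n) - M_n - \varepsilon_n)]$, using $\mathbb{E}[M_\tau] = \mathbb{E}[\varepsilon_\tau] = 0$ in the first step and the pointwise bound $Y_\tau \le \max_{0\le n \le N} Y_n$ in the second. Taking the supremum over $\tau \in {\cal T}$ on the left-hand side then yields the claim. Note this direction uses the ${\cal F}_n$-conditional mean-zero hypothesis on $(\varepsilon_n)$ in an essential way but needs nothing about $M$ beyond being a martingale started at $0$.

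For the lower bound \eqref{V0est1} I would specialize to $M = M^H$ and exploit the Doob--Meyer decomposition $H_n = H_0 + M^H_n - A^H_n$. Since the Snell envelope dominates the reward, $g(n,X_n) \le H_n$, so $g(n,X_n) - M^H_n - \varepsilon_n \le H_0 - (A^H_n + \varepsilon_n)$ for every $n$; taking the maximum over $n$ on the left and replacing $A^H_n + \varepsilon_n$ by $\min_{0 \le n \le N}(A^H_n + \varepsilon_n)$ on the right gives the pointwise inequality $\max_{0 \le n \le N}(g(n,X_n) - M^H_n - \varepsilon_n) \le H_0 - \min_{0 \le n \le N}(A^H_n + \varepsilon_n)$. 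Taking expectations and using $\mathbb{E}[H_0] = V_0$ — which holds because $H_0 = {\rm ess\,sup}_{\tau \in {\cal T}_0}\mathbb{E}[g(\tau) \mid {\cal F}_0]$ with the essential supremum attained at the Snell-envelope optimal stopping time — rearranges to \eqref{V0est1}; here no assumption on $(\varepsilon_n)$ is needed. (As a sanity check, with $\varepsilon_n \equiv 0$ and $M = M^H$ this collapses to the equality $V_0 = \mathbb{E}[\max_{0\le n\le N}(g(n,X_n) - M^H_n)]$, since $A^H$ is nondecreasing with $A^H_0 = 0$.)

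The argument is essentially classical, so I do not expect a serious obstacle; the only points that genuinely need care are the identity $\mathbb{E}[\varepsilon_\tau] = 0$ for every stopping time $\tau$ — which is exactly where the ${\cal F}_n$-conditional mean-zero hypothesis enters, and which explains why that hypothesis is imposed only for the upper bound — and the routine integrability bookkeeping that legitimizes moving from pointwise inequalities to inequalities between expectations.
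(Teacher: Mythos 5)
Your proof is correct and follows essentially the same route as the paper: the lower bound \eqref{V0est1} via $g(n,X_n)\le H_n$ and the Doob--Meyer identity $H_n-M^H_n=H_0-A^H_n$, and the upper bound \eqref{V0est2} via optional stopping together with $\mathbb{E}\,\varepsilon_\tau=\sum_n\mathbb{E}\bigl[1_{\{\tau=n\}}\mathbb{E}[\varepsilon_n\mid{\cal F}_n]\bigr]=0$. The integrability bookkeeping you add is sound and only makes explicit what the paper leaves implicit.
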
 

\begin{proof}
First, note that 
\beas
&& \mathbb{E} \edg{ \max_{0 \le n \le N} \brak{g(n,X_n) - M^H_n - \varepsilon_n}}
\le \mathbb{E} \edg{ \max_{0 \le n \le N} \brak{H_n- M^H_n - \varepsilon_n}}\\
&&= \mathbb{E} \edg{ \max_{0 \le n \le N} \brak{H_0 - A^H_n - \varepsilon_n}}
= V_0 - \mathbb{E} \edg{ \min_{0 \le n \le N} \brak{A^H_n + \varepsilon_n }},
\eeas
which shows \eqref{V0est1}.

Now, assume that $\mathbb{E} \edg{\varepsilon_n \mid {\cal F}_n} = 0$ for all $n \in \crl{0,1, \dots, N}$,
and let $\tau$ be an $X$-stopping time.
Then 
\[
\mathbb{E} \, \varepsilon_{\tau} = \mathbb{E} \edg{\sum_{n=0}^N 1_{\crl{\tau = n}} \varepsilon_n}
= \mathbb{E} \edg{\sum_{n=0}^N 1_{\crl{\tau = n}} \mathbb{E}[\varepsilon_n \mid {\cal F}_n]} = 0.
\]
So one obtains from the optional stopping theorem (see, e.g., \cite{GS01}) that
\[
\mathbb{E} \, g(\tau,X_{\tau})
= \mathbb{E} \edg{g(\tau,X_{\tau}) - M_{\tau} - \varepsilon_{\tau}}
\le \mathbb{E} \edg{\max_{0 \le n \le N} \brak{g(n,X_n) - M_n - \varepsilon_n}}
\]
for every $({\cal F}_n)$-martingale $(M_n)_{n=0}^N$ starting from $0$.
Since $V_0 = \sup_{\tau \in {\cal T}} \mathbb{E} \, g(\tau,X_{\tau})$, this implies \eqref{V0est2}.
\end{proof}

For every $({\cal F}_n)$-martingale $(M_n)_{n=0}^N$ starting from $0$ and each sequence of integrable 
error terms $(\varepsilon_n)_{n=0}^N$ satisfying $\mathbb{E}\edg{\varepsilon_n \mid {\cal F}_n} = 0$
for all $n$, the right side of \eqref{V0est2} provides an upper bound\footnote{Note that 
for the right side of \eqref{V0est2} to be a valid upper bound, it is sufficient that 
$\mathbb{E}\edg{\varepsilon_n \mid {\cal F}_n} = 0$ for all $n$. In particular, 
$\varepsilon_0, \varepsilon_1, \dots, \varepsilon_N$ can have any arbitrary dependence structure.} for $V_0$, and by 
\eqref{V0est1}, this upper bound is tight if $M = M^H$ and $\varepsilon \equiv 0$.
So we try to use our candidate optimal stopping time $\tau^{\Theta}$ 
to construct a martingale close to $M^H$. The closer $\tau^{\Theta}$ is to an optimal stopping time, 
the better the value process\footnote{Again, since $H^{\Theta}_n$, $M^{\Theta}_n$ and $C^{\Theta}_n$ 
are given by conditional expectations, they are only specified up to $\mathbb{P}$-almost sure equality.}
\[
H^{\Theta}_n = \mathbb{E} \edg{g(\tau^{\Theta}_n,X_{\tau^{\Theta}_n}) \mid {\cal F}_n},
\quad n = 0, 1, \dots, N,
\]
corresponding to 
\[
\tau^{\Theta}_{n} = \sum_{m =n}^{N} mf^{\theta_m}(X_m) \prod_{j=n}^{m-1} (1-f^{\theta_j}(X_j)),
\quad n = 0,1,\dots, N,
\]
approximates the Snell envelope $(H_n)_{n=0}^N$. The martingale part of $(H^{\Theta}_n)_{n=0}^N$
is given by $M^{\Theta}_0 = 0$ and
\be \label{MTheta}
M^{\Theta}_n - M^{\Theta}_{n-1} = H^{\Theta}_n - \mathbb{E} \edg{H^{\Theta}_n \mid {\cal F}_{n-1}}
= f^{\theta_n}(X_n) g(n,X_n) + (1-f^{\theta_n}(X_n)) C^{\Theta}_n - C^{\Theta}_{n-1}, \;
n\ge 1,
\ee
for the continuation values\footnote{The two conditional expectations are equal since $(X_n)_{n=0}^N$ 
is Markov and $\tau^{\Theta}_{n+1}$ only depends on $(X_{n+1}, \dots, X_{N-1})$.}
\[
C^{\Theta}_n = \mathbb{E}[g(\tau^{\Theta}_{n+1}, X_{\tau^{\Theta}_{n+1}}) \mid {\cal F}_n] =
\mathbb{E}[g(\tau^{\Theta}_{n+1}, X_{\tau^{\Theta}_{n+1}}) \mid X_n], \quad n = 0,1, \dots, N-1.
\]
Note that $C^{\Theta}_N$ does not have to be specified. It formally appears in \eqref{MTheta} for $n = N$. 
But $(1-f^{\theta_N}(X_N))$ is always $0$.
To estimate $M^{\Theta}$, we generate a third set\footnote{The realizations $(z^k_n)_{n=0}^N$, $k = 1, \dots, K_U$, 
must be drawn independently of $(x^k_n)_{n=0}^N$, $k = 1, \dots, K$, so that our estimate of the upper bound 
does not depend on the samples used to train the stopping decisions. But theoretically, they can depend on 
$(y^k_n)_{n=0}^N$, $k = 1, \dots, K_L$, without affecting the unbiasedness of the estimate $\hat{U}$ or
the validity of the confidence interval derived in Subsection \ref{subsec:ci} below.} of independent realizations 
$(z^k_n)_{n=0}^N$, $k = 1,2,\dots, K_U,$ of $(X_n)_{n=0}^N$. In addition, for every $z^k_n$, we simulate
$J$ continuation paths $\tilde{z}^{k,j}_{n+1}, \dots, \tilde{z}^{k,j}_N$, $j =1, \dots, J$, 
that are conditionally independent\footnote{More precisely, the tuples 
$(\tilde{z}^{k,j}_{n+1}, \dots, \tilde{z}^{k,j}_N)$, $j = 1, \dots, J$, are simulated according to $p_n(z_n^k, \cdot)$, where
$p_n$ is a transition kernel from $\mathbb{R}^d$ to $\mathbb{R}^{(N-n)d}$ 
such that $p_n(X_n, B) =  \mathbb{P}[(X_{n+1}, \dots, X_N) \in B \mid X_n]$
$\mathbb{P}$-almost surely for all Borel sets $B \subseteq \mathbb{R}^{(N-n)d}$. 
We generate them independently of each other across $j$ and $k$. 
On the other hand, the continuation paths starting from $z^k_n$ do not have to be drawn independently
of those starting from $z^k_{n'}$ for $n \neq n'$. 
} 
of each other and of $z^{k}_{n+1}, \dots, z^{k}_N$.
Let us denote by $\tau^{k,j}_{n+1}$ the value of $\tau^{\Theta}_{n+1}$ along 
$\tilde{z}^{k,j}_{n+1}, \dots, \tilde{z}^{k,j}_N$. Estimating the continuation values as
\[
C^k_n = 
\frac{1}{J} \sum_{j = 1}^{J} g \brak{\tau^{k,j}_{n+1}, \tilde{z}^{k,j}_{\tau^{k,j}_{n+1}}}, \quad 
n = 0, 1, \dots, N-1,\]
yields the noisy estimates 
\[
\Delta M^k_n = f^{\theta_n}(z^k_n) g(n,z^k_n) + (1- f^{\theta_n}(z^k_n)) C^k_n - C^k_{n-1}
\]
of the increments $M^{\Theta}_n - M^{\Theta}_{n-1}$ along the $k$-th simulated path $z^k_0, \dots, z^k_N$.
So
\[
M^k_n = \begin{cases}
0 & \mbox{ if } n = 0\\
\sum_{m=1}^n \Delta M^k_m & \mbox{ if } n \ge 1
\end{cases}
\]
can be viewed as realizations of $M^{\Theta}_n + \varepsilon_n$ 
for estimation errors $\varepsilon_n$ with standard deviations proportional to 
$1/\sqrt{J}$ such that $\mathbb{E}\edg{\varepsilon_n \mid {\cal F}_n} = 0$ for all $n$. Accordingly,
\[
\hat{U} = \frac{1}{K_U} \sum_{k =1}^{K_U} \max_{0 \le n \le N} \brak{g \brak{n,z^k_n} - M^k_n},
\]
is an unbiased estimate of the upper bound 
\[
U = \mathbb{E} \edg{\max_{0 \le n \le N} \brak{g(n,X_n) - M^{\Theta}_n - \varepsilon_n}},
\]
which, by the law of large numbers, converges to $U$ for $K_U \to \infty$.

\subsection{Point estimate and confidence intervals}
\label{subsec:ci}

Our point estimate of $V_0$ is the average
\[
\frac{\hat{L} + \hat{U}}{2}.
\]
To derive confidence intervals, we assume that $g(n,X_n)$ is square-integrable\footnote{See condition \eqref{ic2}.}
for all $n$. Then
\[
g(\tau^{\theta}, X_{\tau^{\Theta}}) \quad \mbox{and} \quad 
\max_{0 \le n \le N} \brak{g(n,X_n) - M^{\Theta}_n - \varepsilon_n}
\]
are square-integrable too. Hence, one obtains from the central limit theorem that
for large $K_L$, $\hat{L}$ is approximately normally 
distributed with mean $L$ and variance $\hat{\sigma}^2_L/K_L$ for
\[
\hat{\sigma}^2_L = \frac{1}{K_L-1} \sum_{k=1}^{K_L} \brak{g(l^k,y^k_{l^k})-\hat{L}}^2.
\]
So, for every $\alpha \in (0,1]$, 
\[
\left[ \hat{L} - z_{\alpha/2} \frac{\hat{\sigma}_L}{\sqrt{K}_L} \, , \, \infty \right)
\]
is an asymptotically valid $1-\alpha/2$ confidence interval for $L$, where 
$z_{\alpha/2}$ is the $1-\alpha/2$ quantile of the standard normal distribution.
Similarly, 
\[
\left(-\infty \, , \, \hat{U} +  z_{\alpha/2} \frac{ \hat{\sigma}_U}{\sqrt{K}_U} \right] \quad \mbox{with} \quad
\hat{\sigma}^2_U = \frac{1}{K_U -1} \sum_{k=1}^{K_U} \brak{\max_{0 \le n \le N} \brak{g \brak{n,z^k_n} - M^k_n} - \hat{U}}^2,
\]
is an asymptotically valid $1- \alpha/2$ confidence interval for $U$. It follows that for every constant $\varepsilon > 0$,
one has
\beas
&& \mathbb{P} \edg{V_0 < \hat{L} - z_{\alpha/2} \frac{\hat{\sigma}_L}{\sqrt{K}_L} \; \; \mbox{ or } \;\;
V_0 > \hat{U} + z_{\alpha/2} \frac{\hat{\sigma}_U}{\sqrt{K}_U}}\\
&& \le \mathbb{P} \edg{L < \hat{L} - z_{\alpha/2} \frac{\hat{\sigma}_L}{\sqrt{K}_L}}
+ \mathbb{P} \edg{
U > \hat{U} + z_{\alpha/2} \frac{\hat{\sigma}_U}{\sqrt{K}_U}}
\le \alpha + \varepsilon
\eeas
as soon as $K_L$ and $K_U$ are large enough. In particular,
\be \label{ci}
\left[\hat{L} - z_{\alpha/2} \frac{\hat{\sigma}_L}{\sqrt{K}_L} \, , \,
\hat{U} +  z_{\alpha/2} \frac{\hat{\sigma}_U}{\sqrt{K}_U} \right]
\ee
is an asymptotically valid $1- \alpha$ confidence interval for $V_0$.

\section{Examples}
\label{sec:ex}

In this section we test\footnote{All computations were performed in single precision (float32) 
on a NVIDIA GeForce GTX 1080 GPU with 1974 MHz core clock and 8 GB GDDR5X memory 
with 1809.5 MHz clock rate. The underlying system consisted of an Intel Core i7-6800K 3.4 
GHz CPU with 64 GB DDR4-2133 memory running 
Tensorflow 1.11 on Ubuntu 16.04.} 
our method on three examples: the pricing of a Bermudan max-call option, 
the pricing of a callable multi barrier reverse convertible and 
the problem of optimally stopping a fractional Brownian motion.

\subsection{Bermudan max-call options}
\label{subsec:maxcall}

Bermudan max-call options are one of the most studied examples in the numerics literature on
optimal stopping problems; see, e.g., \cite{LS01,R02, G03, BKT03, HK04, BG04, AB04, BC08, 
BS08, Be11AA, Be13, JO15, Le16}. Their payoff depends on the maximum of $d$ underlying assets. 

Assume the risk-neutral dynamics of the assets are given by a multi-dimensional Black--Scholes
model\footnote{We make this assumption so that we can compare our results to those obtained 
with different methods in the literature. But our approach works for any asset dynamics as long 
as it can efficiently be simulated.}
\be \label{BS}
S^i_t = s^i_0 \exp\!\brak{[r-\delta_i - \sigma^2_i/2] t + \sigma_i W^i_t}, \quad 
i = 1, 2, \dots, d,
\ee
for initial values $ s^i_0 \in (0,\infty)$, a risk-free interest rate $r \in \mathbb{R}$,
dividend yields $\delta_i \in [0,\infty)$, volatilities $\sigma_i \in (0,\infty)$ and 
a $d$-dimensional Brownian motion $W$ with constant instantaneous correlations\footnote{That is,
$\mathbb{E}[(W^i_t-W^i_s)( W^j_t- W^i_s)] = \rho_{ij}(t-s)$ for all $i \neq j$ and $s < t$.} $\rho_{ij} \in \mathbb{R}$ 
between different components $W^i$ and $W^j$.
A Bermudan max-call option on $ S^1, S^2, \dots, S^d$ has payoff 
$\brak{\max_{1 \le i \le d} S^i_t - K}^+
$
and can be exercised at any point of a time grid $0=t_0 <t_1< \dots < t_N$. Its price is given by 
\[
\sup_{\tau} \mathbb{E}\!\left[ e^{ - r \tau } \left( 
\max_{ 1 \le i \le d } S^i_{ \tau } - K \right)^{ \! + } \right] ,
\]
where the supremum is over all $S$-stopping times taking values in $\crl{t_0,t_1, \dots, t_N}$; see, e.g., \cite{S02}. 
Denote $X^i_n = S^i_{t_n}$, $n = 0,1,\dots, N$, and let ${\cal T}$ be the set of $X$-stopping times. Then 
the price can be written as $\sup_{\tau \in {\cal T}} \mathbb{E} \, g(\tau, X_{\tau} )$ for 
\[
g(n,x) =  e^{-r t_n} \brak{\max_{1 \le i \le d} x^i - K}^+,
\]
and it is straight-forward to simulate $(X_n)_{n=0}^N$.

In the following we assume the time grid to be of the form $t_n = nT/N$, $n =0,1,\dots,N$, for a maturity 
$T > 0$ and $N+1$ equidistant exercise dates. Even though $g(n,X_n)$ does not carry any information 
that is not already contained in $X_n$, our method worked more efficiently when we trained the optimal 
stopping decisions on Monte Carlo simulations of the $d+1$-dimensional Markov process 
$(Y_n)_{n=0}^N = (X_n,g(n,X_n))_{n=0}^N$ instead of $(X_n)_{n=0}^N$. 
Since $Y_0$ is deterministic, we first trained stopping times $\tau_1 \in {\cal T}_1$ of the form
\[
\tau_1 = \sum_{n=1}^{N} n f^{\theta_n}(Y_n) \prod_{j=1}^{n-1} (1-f^{\theta_j}(Y_k))
\]
for $f^{\theta_N} \equiv 1$ and $f^{\theta_1}, \dots, f^{\theta_{N-1}} \colon \mathbb{R}^{d+1} \to \crl{0,1}$
given by \eqref{ftheta} with $I=3$ and $q_1 = q_2 = d+40$. Then we determined our
candidate optimal stopping times as 
\[
\tau^{\Theta} = 
\begin{cases}
0 & \mbox{if } f_0 = 1\\
\tau_1 & \mbox{if } f_0 = 0
\end{cases}
\]
for a constant $f_0 \in \crl{0,1}$ depending\footnote{In fact, in none of the examples 
in this paper it is optimal to stop at time $0$. So $\tau^{\Theta} = \tau_1$ in all these cases.} 
on whether it was optimal to stop immediately at time $0$ or not (see Remark \ref{Rem:I0} above).

It is straight-forward to simulate from model \eqref{BS}.
We conducted 3,00$0+d$ training steps, in each of which we generated a batch of 
8,192 paths of $(X_n)_{n=0}^N$. To estimate the lower bound $L$ we simulated $K_L = 4,$096,000 trial paths.
For our estimate of the upper bound $U$, we produced $K_U = 1$,024 paths $(z^k_n)_{n = 0}^N$,
$k=1, \dots, K_U$, of $(X_n)_{n=0}^N$ and $K_U \times J$ realizations 
$(v^{k,j}_n)_{n=1}^N$, $k = 1, \dots, K_U$, $j =1, \dots, J$, of $(W_{t_n}- W_{t_{n-1}})_{n=1}^N$
with $J = 16,$384. Then for all $n$ and $k$, we generated the $i$-th component of the $j$-th continuation 
path departing from $z^k_n$ according to
\[
\tilde{z}^{i,k,j}_m = z^{i,k}_n
\exp \brak{[r- \delta_i - \sigma^2_i/2] (m-n) \Delta t + \sigma_i[v^{i,k,j}_{n+1} + \dots + v^{i,k,j}_m]}, \quad m = n+1, \dots, N.
\]

\medskip
{\bf Symmetric case}\\
We first considered the special case, where $s^i_0 = s_0$, $\delta_i = \delta$, $\sigma_i = \sigma$
for all $i =1, \dots, d,$ and $\rho_{ij} = \rho$ for all $i \neq j$. Our results are reported in 
Table~\ref{table:symm}.

\medskip
{\bf Asymmetric case}\\
As a second example, we studied model \eqref{BS} with
$s^i_0 = s_0$, $\delta_i = \delta$ for all $i = 1, 2, \dots, d,$ and $\rho_{ij} = \rho$ for
all $i \neq j$, but different volatilities $\sigma_1 < \sigma_2 < \dots < \sigma_d$. 
For $d \le 5$, we chose the specification $\sigma_i = 0.08 + 0.32 \times (i-1)/(d-1)$, $i=1,2, \dots, d$. 
For $d > 5$, we set $\sigma_i = 0.1 + i/(2d)$, $i = 1,2, \dots, d$. The results are given in 
Table~\ref{table:asymm}.

\begin{table}
\centering
\begin{small}
\begin{tabular}{c c c c c c c c c c} 
 \hline \\[-3mm]
  $d$ & $s_0$ & $\hat{L}$ & $t_L$ & $\hat{U}$ & $t_U$ & Point est.\ & $95\%$ CI & Binomial & BC $95\%$ CI\\[1mm]
 \hline \\[-3mm]
 $2$ & $90$ & $8.072$ & $28.7$ & $8.075$ & $25.4$ & $8.074$ & $[8.060, 8.081]$ & $8.075$ &\\
 $2$ & $100$ & $13.895$ & $28.7$ & $13.903$ & $25.3$ & $13.899$ & $[13.880, 13.910]$ & $13.902$ &\\
 $2$ & $110$ & $21.353$ & $28.4$ & $21.346$ & $25.3$ & $21.349$ & $[21.336, 21.354]$ & $21.345$ &\\[1mm]
 $3$ & $90$ & $11.290$ & $28.8$ & $11.283$ & $26.3$ & $11.287$ & $[11.276, 11.290]$ & $11.29$ &\\
 $3$ & $100$ & $18.690$ & $28.9$ & $18.691$ & $26.4$ & $18.690$ & $[18.673, 18.699]$ & $18.69$ &\\
 $3$ & $110$ & $27.564$ & $27.6$ & $27.581$ & $26.3$ & $27.573$ & $[27.545, 27.591]$ & $27.58$&\\[1mm]
 $5$ & $90$ & $16.648$ & $27.6$ & $16.640$ & $28.4$ & $16.644$ & $[16.633, 16.648]$ & & $[16.620, 16.653]$\\
 $5$ & $100$ & $26.156$ & $28.1$ & $26.162$ & $28.3$ & $26.159$ & $[26.138, 26.174]$ & & $[26.115, 26.164]$\\
 $5$ & $110$ & $36.766$ & $27.7$ & $36.777$ & $28.4$ & $36.772$ & $[36.745, 36.789]$ & & $[36.710, 36.798]$\\[1mm]
 $10$ & $90$ & $26.208$ & $30.4$ & $26.272$ & $33.9$ & $26.240$ & $[26.189, 26.289]$ & &\\
 $10$ & $100$ & $38.321$ & $30.5$ & $38.353$ & $34.0$ & $38.337$ & $[38.300, 38.367]$ & &\\
 $10$ & $110$ & $50.857$ & $30.8$ & $50.914$ & $34.0$ & $50.886$ & $[50.834, 50.937]$ & &\\[1mm]
 $20$ & $90$ & $37.701$ & $37.2$ & $37.903$ & $44.5$ & $37.802$ & $[37.681, 37.942]$ & &\\
 $20$ & $100$ & $51.571$ & $37.5$ & $51.765$ & $44.3$ & $51.668$ & $[51.549, 51.803]$ & &\\
 $20$ & $110$ & $65.494$ & $37.3$ & $65.762$ & $44.4$ & $65.628$ & $[65.470, 65.812]$ & &\\[1mm]
 $30$ & $90$ & $44.797$ & $45.1$ & $45.110$ & $56.2$ & $44.953$ & $[44.777, 45.161]$ & &\\
 $30$ & $100$ & $59.498$ & $45.5$ & $59.820$ & $56.3$ & $59.659$ & $[59.476, 59.872]$ & &\\
 $30$ & $110$ & $74.221$ & $45.3$ & $74.515$ & $56.2$ & $74.368$ & $[74.196, 74.566]$ & &\\[1mm]
 $50$ & $90$ & $53.903$ & $58.7$ & $54.211$ & $79.3$ & $54.057$ & $[53.883, 54.266]$ & &\\
 $50$ & $100$ & $69.582$ & $59.1$ & $69.889$ & $79.3$ & $69.736$ & $[69.560, 69.945]$ & &\\
 $50$ & $110$ & $85.229$ & $59.0$ & $85.697$ & $79.3$ & $85.463$ & $[85.204, 85.763]$ & &\\[1mm]
 $100$ & $90$ & $66.342$ & $95.5$ & $66.771$ & $147.7$ & $66.556$ & $[66.321, 66.842]$ & &\\
 $100$ & $100$ & $83.380$ & $95.9$ & $83.787$ & $147.7$ & $83.584$ & $[83.357, 83.862]$ & &\\
 $100$ & $110$ & $100.420$ & $95.4$ & $100.906$ & $147.7$ & $100.663$ & $[100.394, 100.989]$ & &\\[1mm]
 $200$ & $90$ & $78.993$ & $170.9$ & $79.355$ & $274.6$ & $79.174$ & $[78.971, 79.416]$ & &\\
 $200$ & $100$ & $97.405$ & $170.1$ & $97.819$ & $274.3$ & $97.612$ & $[97.381, 97.889]$ & &\\
 $200$ & $110$ & $115.800$ & $170.6$ & $116.377$ & $274.5$ & $116.088$ & $[115.774, 116.472]$ & &\\[1mm]
 $500$ & $90$ & $95.956$ & $493.4$ & $96.337$ & $761.2$ & $96.147$ & $[95.934, 96.407]$ & &\\
 $500$ & $100$ & $116.235$ & $493.5$ & $116.616$ & $761.7$ & $116.425$ & $[116.210, 116.685]$ & &\\
 $500$ & $110$ & $136.547$ & $493.7$ & $136.983$ & $761.4$ & $136.765$ & $[136.521, 137.064]$ & &\\[1mm]
 \hline
\end{tabular}
\caption{\label{table:symm}Summary results for max-call options on $d$ symmetric assets for parameter values of
$r = 5\%$, $\delta = 10\%$, $\sigma = 20\%$, $\rho = 0$, $K=100$, $T=3$, $N=9$.
$t_L$ is the number of seconds it took to train $\tau^{\Theta}$ and compute $\hat{L}$.
$t_U$ is the computation time for $\hat{U}$ in seconds.
95\% CI is the 95\% confidence interval \eqref{ci}. The binomial values were calculated 
with a binomial lattice method in \cite{AB04}. BC 95\% CI is the 95\% confidence interval computed in \cite{BC08}.}
\end{small}
\end{table}

\begin{table} 
\centering
\begin{small}
\begin{tabular}{c c c c c c c c c} 
 \hline \\[-3mm]
  $d$ & $s_0$ & $\hat{L}$ & $t_L$ & $\hat{U}$ & $t_U$ & Point est.\ & $95\%$ CI & BC $95\%$ CI\\[1mm]
 \hline \\[-3mm]
 $2$ & $90$ & $14.325$ & $26.8$ & $14.352$ & $25.4$ & $14.339$ & $[14.299, 14.367]$ &\\
 $2$ & $100$ & $19.802$ & $27.0$ & $19.813$ & $25.5$ & $19.808$ & $[19.772, 19.829]$ &\\
 $2$ & $110$ & $27.170$ & $26.5$ & $27.147$ & $25.4$ & $27.158$ & $[27.138, 27.163]$ &\\[1mm]
 $3$ & $90$ & $19.093$ & $26.8$ & $19.089$ & $26.5$ & $19.091$ & $[19.065, 19.104]$ &\\
 $3$ & $100$ & $26.680$ & $27.5$ & $26.684$ & $26.4$ & $26.682$ & $[26.648, 26.701]$ &\\
 $3$ & $110$ & $35.842$ & $26.5$ & $35.817$ & $26.5$ & $35.829$ & $[35.806, 35.835]$ &\\[1mm]
 $5$ & $90$ & $27.662$ & $28.0$ & $27.662$ & $28.6$ & $27.662$ & $[27.630, 27.680]$ & $[27.468, 27.686]$\\
 $5$ & $100$ & $37.976$ & $27.5$ & $37.995$ & $28.6$ & $37.985$ & $[37.940, 38.014]$ & $[37.730, 38.020]$\\
 $5$ & $110$ & $49.485$ & $28.2$ & $49.513$ & $28.5$ & $49.499$ & $[49.445, 49.533]$ & $[49.155, 49.531]$\\[1mm]
 $10$ & $90$ & $85.937$ & $31.8$ & $86.037$ & $34.4$ & $85.987$ & $[85.857, 86.087]$ &\\
 $10$ & $100$ & $104.692$ & $30.9$ & $104.791$ & $34.2$ & $104.741$ & $[104.603, 104.864]$ &\\
 $10$ & $110$ & $123.668$ & $31.0$ & $123.823$ & $34.4$ & $123.745$ & $[123.570, 123.904]$ &\\[1mm]
 $20$ & $90$ & $125.916$ & $38.4$ & $126.275$ & $45.6$ & $126.095$ & $[125.819, 126.383]$ &\\
 $20$ & $100$ & $149.587$ & $38.2$ & $149.970$ & $45.2$ & $149.779$ & $[149.480, 150.053]$ &\\
 $20$ & $110$ & $173.262$ & $38.4$ & $173.809$ & $45.3$ & $173.536$ & $[173.144, 173.937]$ &\\[1mm]
 $30$ & $90$ & $154.486$ & $46.5$ & $154.913$ & $57.5$ & $154.699$ & $[154.378, 155.039]$ &\\
 $30$ & $100$ & $181.275$ & $46.4$ & $181.898$ & $57.5$ & $181.586$ & $[181.155, 182.033]$ &\\
 $30$ & $110$ & $208.223$ & $46.4$ & $208.891$ & $57.4$ & $208.557$ & $[208.091, 209.086]$ &\\[1mm]
 $50$ & $90$ & $195.918$ & $60.7$ & $196.724$ & $81.1$ & $196.321$ & $[195.793, 196.963]$ &\\
 $50$ & $100$ & $227.386$ & $60.7$ & $228.386$ & $81.0$ & $227.886$ & $[227.247, 228.605]$ &\\
 $50$ & $110$ & $258.813$ & $60.7$ & $259.830$ & $81.1$ & $259.321$ & $[258.661, 260.092]$ &\\[1mm]
 $100$ & $90$ & $263.193$ & $98.5$ & $264.164$ & $151.2$ & $263.679$ & $[263.043, 264.425]$ &\\
 $100$ & $100$ & $302.090$ & $98.2$ & $303.441$ & $151.2$ & $302.765$ & $[301.924, 303.843]$ &\\
 $100$ & $110$ & $340.763$ & $97.8$ & $342.387$ & $151.1$ & $341.575$ & $[340.580, 342.781]$ &\\[1mm]
 $200$ & $90$ & $344.575$ & $175.4$ & $345.717$ & $281.0$ & $345.146$ & $[344.397, 346.134]$ &\\
 $200$ & $100$ & $392.193$ & $175.1$ & $393.723$ & $280.7$ & $392.958$ & $[391.996, 394.052]$ &\\
 $200$ & $110$ & $440.037$ & $175.1$ & $441.594$ & $280.8$ & $440.815$ & $[439.819, 441.990]$ &\\[1mm]
 $500$ & $90$ & $476.293$ & $504.5$ & $477.911$ & $760.7$ & $477.102$ & $[476.069, 478.481]$ &\\
 $500$ & $100$ & $538.748$ & $504.6$ & $540.407$ & $761.6$ & $539.577$ & $[538.499, 540.817]$ &\\
 $500$ & $110$ & $601.261$ & $504.9$ & $603.243$ & $760.8$ & $602.252$ & $[600.988, 603.707]$ &\\[1mm]
 \hline
\end{tabular}
\caption{\label{table:asymm}Summary results for max-call options on $d$ asymmetric assets for parameter values of
$r = 5\%$, $\delta = 10\%$, $\rho = 0$, $K=100$, $T=3$, $N=9$.
$t_L$ is the number of seconds it took to train $\tau^{\Theta}$ and compute $\hat{L}$.
$t_U$ is the computation time for $\hat{U}$ in seconds.
95\% CI is the 95\% confidence interval \eqref{ci}. BC 95\% CI is 
the 95\% confidence interval computed in \cite{BC08}.}
\end{small}
\end{table}

\subsection{Callable multi barrier reverse convertibles}
\label{subsec:callable}

A MBRC is a coupon paying security that converts into shares of the worst-performing
of $d$ underlying assets if a prespecified trigger event occurs.
Let us assume that the price of the $i$-th underlying asset in percent of its starting value
follows the risk-neutral dynamics 
\be \label{BS2}
S^i_t = 
\begin{cases}100 \exp\!\brak{[r - \sigma^2_i/2] t + \sigma_i W^i_t} & \mbox{for } t \in [0 , T_i)\\
100 (1-\delta_i) \exp\!\brak{[r - \sigma^2_i/2] t + \sigma_i W^i_t} & \mbox{for } t \in [T_i,T]
\end{cases}
\ee
for a risk-free interest rate $r \in \mathbb{R}$, volatility $\sigma_i \in (0,\infty)$, maturity $T \in (0,\infty)$,
dividend payment time $T_i \in (0,T)$, dividend rate $\delta_i \in [0,\infty)$
and a $d$-dimensional Brownian motion $W$ with constant instantaneous 
correlations $\rho_{ij} \in \mathbb{R}$ between different components $W^i$ and $W^j$.

Let us consider a MBRC that pays a coupon $c$ at each of $N$ time points $t_n = nT/N$, $n = 1, 2, \dots, N$, 
and makes a time-$T$ payment of
\[
G = \begin{cases}
F & \mbox{ if } \min_{1 \le i \le d} \min_{1 \le m \le M}  S^i_{u_m} > B \mbox{ or } \min_{1 \le i \le d} S^i_T > K\\
\min_{1 \le i \le d} S^i_T & \mbox{ if } \min_{1 \le i \le d} \min_{1 \le m \le M} S^i_{u_m} \le B \mbox{ and } \min_{1 \le i \le d} S^i_T \le K,
\end{cases}
\]
where $F \in [0,\infty)$ is the nominal amount, $B \in [0,\infty)$ a barrier,
$K \in [0,\infty)$ a strike price and $u_m$ the end of the $m$-th trading day.
Its value is 
\be \label{payoff}
\sum_{n=1}^N e^{-rt_n} c + e^{-rT} \mathbb{E} \, G
\ee
and can easily be estimated with a standard Monte Carlo approximation.

A callable MBRC can be redeemed by the issuer at any of the times 
$t_1, t_2, \dots, t_{N-1}$ by paying back the notional. To minimize costs, the issuer will 
try to find a $\crl{t_1, t_2, \dots, T}$-valued stopping time such that
\[
\mathbb{E} \edg{\sum_{n=1}^{\tau} e^{-r t_n} c + 1_{\crl{\tau < T}} e^{-r \tau} F + 1_{\crl{\tau = T}} e^{-r T} G}
\]
is minimal. 

Let $(X_n)_{n=1}^N$ be the $d+1$-dimensional Markov process given by 
$X^i_n = S^i_{t_n}$ for $i = 1, \dots, d$, and 
\[
X^{d+1}_n := \begin{cases} 
1 & \mbox{ if the barrier has been breached before or at time $t_n$}\\
0 & \mbox{ else}.
\end{cases}
\]
Then the issuer's minimization problem can be written as 
\be \label{min}
\inf_{\tau \in {\cal T}} \mathbb{E} \, g(\tau, X_{\tau}),
\ee
where ${\cal T}$ is the set of all $X$-stopping times and 
\[
g(n,x) = \begin{cases}
\sum_{m=1}^n e^{-r t_m} c + e^{-rt_n} F & \mbox{ if } 1 \le n \le N-1 \mbox{ or } x^{d+1}=0\\
\sum_{m=1}^N e^{-r t_m} c + e^{-r t_N} h(x) & \mbox{ if } n = N \mbox{ and } x^{d+1} = 1,
\end{cases}
\]
where
\[
h(x) = \begin{cases} 
F & \mbox{ if } \min_{1 \le i \le d} x^i > K\\
\min_{1 \le i \le d} x^i & \mbox{ if } \min_{1 \le i \le d} x^i \le K.
\end{cases}
\]
Since the issuer cannot redeem at time $0$, we trained stopping times of the form
\[
\tau^{\Theta} = \sum_{n=1}^{N} n f^{\theta_n}(Y_n) \prod_{j=1}^{n-1} (1-f^{\theta_j}(Y_k)) \in {\cal T}_1
\]
for $f^{\theta_N} \equiv 1$ and $f^{\theta_1}, \dots, f^{\theta_{N-1}} \colon \mathbb{R}^{d+1} \to \crl{0,1}$
given by \eqref{ftheta} with $I=3$ and $q_1 = q_2 = d+ 40$. Since \eqref{min} is a minimization problem, 
$\tau^{\Theta}$ yields an upper bound and the dual method a lower bound.

We simulated the model \eqref{BS2} like \eqref{BS} in Subsection \ref{subsec:maxcall} with the same 
number of trials except that here we used the lower number $J = 1,$024 to estimate the dual bound.
Numerical results are reported in Table~\ref{table:rbc}.

\begin{table} 
\centering
\begin{small}
\begin{tabular}{c c c c c c c c c} 
 \hline \\[-3mm]
 $d$ & $\rho$ & $\hat{L}$ & $t_L$ & $\hat{U}$ & $t_U$ & Point est.\ & $95\%$ CI & Non-callable\ \\[1mm]
 \hline \\[-3mm]
 $2$ & $0.6$ & $98.235$ & $24.9$ & $98.252$ & $204.1$ & $98.243$ & $[98.213,98.263]$ & $106.285$ \\
 $2$ & $0.1$ & $97.634$ & $24.9$ & $97.634$ & $198.8$ & $97.634$ & $[97.609,97.646]$ & $106.112$ \\
 $3$ & $0.6$ & $96.930$ & $26.0$ & $96.936$ & $212.9$ & $96.933$ & $[96.906,96.948]$ & $105.994$ \\
 $3$ & $0.1$ & $95.244$ & $26.2$ & $95.244$ & $211.4$ & $95.244$ & $[95.216,95.258]$ & $105.553$ \\
 $5$ & $0.6$ & $94.865$ & $41.0$ & $94.880$ & $239.2$ & $94.872$ & $[94.837,94.894]$ & $105.530$ \\
 $5$ & $0.1$ & $90.807$ & $41.1$ & $90.812$ & $238.4$ & $90.810$ & $[90.775,90.828]$ & $104.496$ \\
 $10$ & $0.6$ & $91.568$ & $71.3$ & $91.629$ & $300.9$ & $91.599$ & $[91.536,91.645]$ & $104.772$ \\
 $10$ & $0.1$ & $83.110$ & $71.7$ & $83.137$ & $301.8$ & $83.123$ & $[83.078,83.153]$ & $102.495$ \\
 $15$ & $0.6$ & $89.558$ & $94.9$ & $89.653$ & $359.8$ & $89.606$ & $[89.521,89.670]$ & $104.279$ \\
 $15$ & $0.1$ & $78.495$ & $94.7$ & $78.557$ & $360.5$ & $78.526$ & $[78.459,78.571]$ & $101.209$ \\
 $30$ & $0.6$ & $86.089$ & $158.5$ & $86.163$ & $534.1$ & $86.126$ & $[86.041,86.180]$ & $103.385$\\
 $30$ & $0.1$ & $72.037$ & $159.3$ & $72.749$ & $535.6$ & $72.393$ & $[71.830,72.760]$ & $99.279$\\[1mm]
 \hline
\end{tabular}
\caption{\label{table:rbc}Summary results for callable MBRCs with $d$ underlying assets for
$F = K = 100$, $B = 70$, $T = 1$ year ($= 252$ trading days), $N = 12$, $c = 7/12$, $\delta_i = 5\%$, 
$T_i = 1/2$, $r = 0$, $\sigma_i = 0.2$ and $\rho_{ij} = \rho$ for $i \neq j$. 
$t_U$ is the number of seconds it took to train $\tau^{\Theta}$ and compute $\hat{U}$.
$t_L$ is the number of seconds it took to compute $\hat{L}$. The last column lists 
fair values of the same MBRCs without the callable feature. We estimated them 
by averaging 4,096,000 Monte Carlo samples of the payoff. This took between 5 (for $d=2$) and 
44 (for $d$ = 30) seconds.}
\end{small}
\end{table}

\subsection{Optimally stopping a fractional Brownian motion}
\label{subsec:fBm}

A fractional Brownian motion with Hurst parameter $ H \in (0,1] $ 
is a continuous centered Gaussian process $ ( W^H_t )_{ t \ge 0 } $ with covariance structure 
\[
\mathbb{E}[W^H_t W^H_s] = \frac{1}{2} \brak{t^{2H} + s^{2H} - |t-s|^{2H}};
\]
see, e.g., \cite{MVN,ST}. For $ H = 1/2$, $W^H$ is a standard Brownian motion. So, by the 
optional stopping theorem, one has $\mathbb{E}\, W^{1/2}_{ \tau }= 0$ for every $W^{1/2}$-stopping time $\tau$
bounded above by a constant; see, e.g., \cite{GS01}. However, 
for $ H \neq 1/2$, the increments of $W^H$ are correlated --
positively for $ H \in (1/2, 1 ]$ and negatively for $ H \in ( 0, 1/2) $. 
In both cases, $W^H$ is neither a martingale nor a Markov process,
and there exist bounded $W^H $-stopping times $\tau$ such that $ \mathbb{E}\, W^H_{ \tau }> 0$;
see, e.g., \cite{KG16} for two classes of simple stopping rules $ 0 \leq \tau \le 1 $ 
and estimates of the corresponding expected values $\mathbb{E} \, W^H_{ \tau }$.

To approximate the supremum 
\be \label{stfBm}
\sup_{ 0 \leq \tau \le 1 } 
\mathbb{E}\, W^H_{\tau} 
\ee over all $W^H$-stopping times $ 0 \leq \tau \le 1 $,
we denote $ t_n = n/100$, $n = 0, 1, 2, \dots, 100$, and introduce the 
$100$-dimensional Markov process $(X_n)_{n=0}^{100}$ given by 
\[
\begin{split}
X_0 &= (0,0,\dots, 0)\\
X_1 &= (W^H_{t_1},0, \dots, 0)\\
X_2 &= (W^H_{t_2}, W^H_{t_1}, 0, \dots, 0)\\
\vdots & \\
X_{100} &= (W^H_{t_{100}}, W^H_{t_{99}}, \dots, W^H_{t_1}).
\end{split}
\]
The discretized stopping problem 
\be \label{disfBm}
\sup_{ \tau \in {\cal T} } \mathbb{E} \, g(X_{\tau}),
\ee
where ${\cal T}$ is the set of all $X$-stopping times and $g \colon \mathbb{R}^{100} \to \mathbb{R}$
the projection $(x^1, \dots, x^{100}) \mapsto x^1$, approximates \eqref{stfBm} from below. 

We computed estimates of \eqref{disfBm} for $H \in \{ 0.01, 0.05, 0.1, 0.15, \dots, 1 \}$ by 
training networks of the form \eqref{ftheta} with depth $I = 3$, $d = 100 $ and $q_1 = q_2 = 140$. 
To simulate the vector $Y = (W^H_{t_n})_{n=0}^{100}$, we used the representation $Y = B Z$, 
where $BB^T$ is the Cholesky decomposition of the covariance matrix 
of $Y$ and $Z$ a $100$-dimensional random vector with independent standard normal components.
We carried out 6,000 training steps with a batch size of 2,048. To estimate the lower bound $L$ we 
generated $K_L = 4,$096,000 simulations of $Z$. For our estimate of the upper bound $U$, we first 
simulated $K_U = 1,$024 realizations $v^k$, $k = 1, \dots, K_U$ of $Z$ and set $w^k = B v^k$.
Then we produced another $K_U \times J$ simulations $\tilde{v}^{k,j}$, 
$k=1, \dots, K_U$, $j=1, \dots, J$, of $Z$, and generated for all $n$ and $k$, continuation paths starting from 
\[
z^k_n = (w^k_n, \dots, w^k_1, 0 , \dots, 0)
\]
according to 
\[
\tilde{z}^{k,j}_m
= (\tilde{w}^{k,j}_m, \dots, \tilde{w}^{k,j}_{n+1}, w^k_n, \dots, w^k_1, 0 \dots,0), \quad
m = n+1, \dots, 100,
\]
with
\[
\tilde{w}^{k,j}_l = \sum_{i=1}^n B_{li} v^k_i + \sum_{i=n+1}^l B_{li} \tilde{v}^{k,j}_i, \quad 
l = n+1, \dots, m.
\]
For $H \in \crl{0.01,...,0.4} \cup \crl{0.6,...,1.0}$, we chose $J = 16,$384, and for 
$H \in \crl{0.45,0.5,0.55}$, $J = 32,$768. The results are listed in Table~\ref{table:fbm}
and depicted in graphical form in Figure~\ref{fig:fbm}. 
Note that for $H = 1/2$ and $H=1$, our 95\% confidence intervals contain the true values, which 
in these two cases, can be calculated exactly. As mentioned above, $W^{1/2}$ is a Brownian motion, and therefore,
$\mathbb{E} \, W^{1/2}_{\tau} = 0$ for every $(W^{1/2}_{t_n})_{n = 0}^{100}$-stopping time $\tau$.
On the other hand, one has\footnote{\label{fn2}up to $\mathbb{P}$-almost sure equality} $W^1_t = t W^1_1$, $t \ge 0$. 
So, in this case, the optimal stopping time is 
given\footref{fn2} by 
\[
\tau = \begin{cases}
1 & \mbox{ if } W^1_{t_1} > 0\\
t_1 & \mbox{ if } W^1_{t_1} \le 0,
\end{cases}
\]
and the corresponding expectation by 
\[
\mathbb{E} \, W^1_{\tau} = \mathbb{E} \edg{W^1_1 1_{\crl{W^1_{t_1} > 0}} - W^1_{t_1} 1_{\crl{W^1_{t_1} \le 0}}}
= 0.99 \, \mathbb{E} \edg{W^1_1 1_{\crl{W^1_1 > 0}}} = 0.99/\sqrt{2 \pi} = 0.39495...
\]
Moreover, it can be seen that for $H \in (1/2,1)$, our estimates are up to
three times higher than the expected payoffs generated by the heuristic stopping rules of \cite{KG16}.
For $H \in(0, 1/2)$, they are up to five times higher. 

\begin{table}
\centering
\begin{small}
\begin{tabular}{c c c c c c c} 
 \hline \\[-3mm]
 $H$ & $\hat{L}$ & $\hat{U}$ & Point est.\ & $95\%$ CI\\[1mm]
 \hline \\[-3mm]
 $0.01$ & $1.518$ & $1.519$ & $1.519$ & $[1.517,1.520]$\\
 $0.05$ & $1.293$ & $1.293$ & $1.293$ & $[1.292,1.294]$\\
 $0.10$ & $1.048$ & $1.049$ & $1.049$ & $[1.048,1.050]$\\
 $0.15$ & $0.838$ & $0.839$ & $0.839$ & $[0.838,0.840]$\\
 $0.20$ & $0.658$ & $0.659$ & $0.658$ & $[0.657,0.659]$\\
 $0.25$ & $0.501$ & $0.504$ & $0.503$ & $[0.501,0.505]$\\
 $0.30$ & $0.369$ & $0.370$ & $0.370$ & $[0.368,0.371]$\\
 $0.35$ & $0.255$ & $0.256$ & $0.255$ & $[0.254,0.257]$\\
 $0.40$ & $0.155$ & $0.158$ & $0.156$ & $[0.154,0.158]$\\
 $0.45$ & $0.067$ & $0.075$ & $0.071$ & $[0.066,0.075]$\\
 $0.50$ & $0.000$ & $0.005$ & $0.002$ & $[0.000,0.005]$\\
 $0.55$ & $0.057$ & $0.065$ & $0.061$ & $[0.057,0.065]$\\
 $0.60$ & $0.115$ & $0.118$ & $0.117$ & $[0.115,0.119]$\\
 $0.65$ & $0.163$ & $0.165$ & $0.164$ & $[0.163,0.166]$\\
 $0.70$ & $0.206$ & $0.207$ & $0.207$ & $[0.205,0.208]$\\
 $0.75$ & $0.242$ & $0.245$ & $0.244$ & $[0.242,0.245]$\\
 $0.80$ & $0.276$ & $0.278$ & $0.277$ & $[0.276,0.279]$\\
 $0.85$ & $0.308$ & $0.309$ & $0.308$ & $[0.307,0.310]$\\
 $0.90$ & $0.336$ & $0.339$ & $0.337$ & $[0.335,0.339]$\\
 $0.95$ & $0.365$ & $0.367$ & $0.366$ & $[0.365,0.367]$\\
 $1.00$ & $0.395$ & $0.395$ & $0.395$ & $[0.394,0.395]$\\[1mm]
 \hline
\end{tabular}
\caption{ \label{table:fbm}Estimates of $\sup_{\tau \in \crl{0,t_1, \dots, 1}} \mathbb{E}\, W^H_{ \tau }$. For all 
$H \in \crl{0.01, 0.05, \dots, 1}$, it took about 430 seconds to train $\tau^{\Theta}$ and compute $\hat{L}$.
The computation of $\hat{U}$ took about 17,000 seconds for $H \in \crl{0.01, \dots, 0.4} \cup \crl{0.6, \dots, 1}$ and
about 34,000 seconds for $H \in \crl{0.45,0.5,0.55}$.}
\end{small}
\end{table}

\begin{figure} 
\centering
\includegraphics{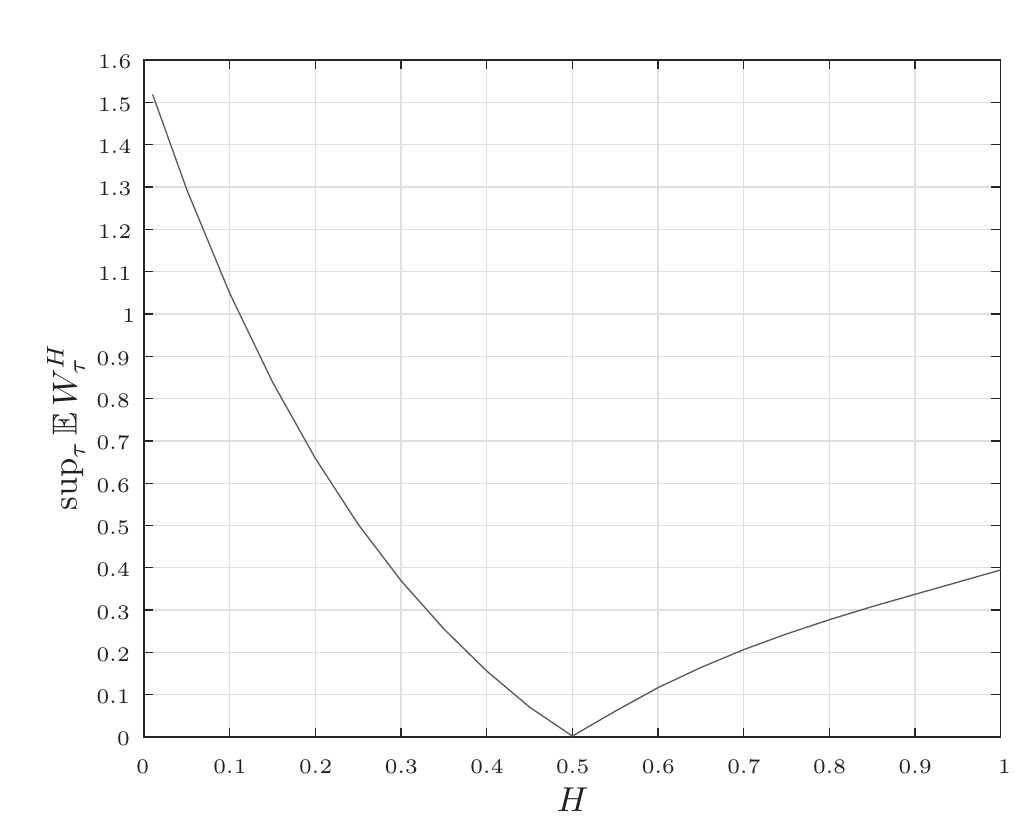}
\caption{\label{fig:fbm}Estimates of $\sup_{\tau \in \crl{0,t_1, \dots, 1}} \mathbb{E}\, W^H_{ \tau }$ for 
different values of $H$.}
\end{figure}

\end{document}